\DeclareMathOperator{\cl}{cl}
\DeclareMathOperator{\dom}{dom}
\DeclareMathOperator{\fat}{fat}
\DeclareMathOperator{\inte}{int}
\DeclareMathOperator{\len}{len}
\DeclareMathOperator{\upper}{\uparrow}
\newcommand{\widebigcap}[1]{%
\ \bigcap_{\mathclap{\substack{#1}}}\ }
\newcommand{\widebigcup}[1]{%
\ \bigcup_{\mathclap{\substack{#1}}}\ }
\newcommand{\bd}{\partial}
\begin{document}
\title{Existence of strongly proper dyadic subbases}
\author{Yasuyuki Tsukamoto}
\address{Graduate School of Human and Environmental Studies,
Kyoto University, Japan.}
\email{tsukamoto@i.h.kyoto-u.ac.jp}

\keywords{Domain theory, subbases, separable metric spaces}
\subjclass{%D.3.1, % Formal Definitions and Theory, 
I.1.1, %Expressions and their representation
F.3.2, %Semantics of Programming Languages, 
F.4.1. %Mathematical Logic
}

\begin{abstract}
\noindent
We consider a topological space with its subbase
 which induces a coding for each point.
Every second-countable Hausdorff space has a subbase
 that is the union of countably many pairs of disjoint open subsets.
A dyadic subbase is such a subbase with a fixed enumeration.
If a dyadic subbase is given,
 then we obtain a domain representation of the given space.
The properness and the strong properness of dyadic subbases have been studied,
 and it is known that
 every strongly proper dyadic subbase induces an admissible domain representation
 regardless of its enumeration.
We show that every locally compact separable metric space
 has a strongly proper dyadic subbase.
\end{abstract}
\maketitle
\section{Introduction}
Let $X$ be a second-countable Hausdorff space.
We consider a subbase of $X$
 which induces a coding for each point of $X$.
Since $X$ is second-countable,
 it has a subbase that is the union of countably many pairs
 of disjoint open subsets.
A \emph{dyadic subbase} $S$ is such a subbase with a fixed enumeration.
Let $(S_n^0,S_n^1)$, $n<\omega$, be the $n$-th pair of $S$.
For each point $x\in X$,
 the $n$-th digit of the coding of $x$ will be decided between $0$ and $1$
 depending on which of $S_n^0$ and $S_n^1$ contains $x$.
Since $x$ may belong to none of the two, we allow undefinedness in the coding,
 and the \emph{bottom character} $\bot$ is used
 in the sequence unless it is decided.
Every sequence that contains the bottom character
 is called a \emph{bottomed sequence.}

When a dyadic subbase $S$ is given,
 each point $x$ of $X$ is represented
 by a bottomed sequence $\varphi_S(x)\in \{0,1,\bot\}^{\omega}$.
Every finite prefix of the bottomed sequence $\varphi_S(x)$
 can be considered as a finite time state of the output of computation,
 and the properties of the set
 $K_S:=\{\varphi_S(x)|_n: x\in X,\,n<\omega\}$
 of these sequences have been studied.
%The properness of a dyadic subbase $S$ ensures that
% every pair of two points of $X$ can be separated
% by a pair of $S$.
If $S$ is proper and $K_S$ is a conditional upper semilattice with least element (cusl),
 then we obtain an admissible domain representation of $X$ \cite{tsuiki15}.
Whether $K_S$ is a cusl depends not only on $S$
 itself but also on the enumeration of $S$.
It has been proved that
 $K_S$ is a cusl regardless of the enumeration of $S$
 if and only if $S$ is strongly proper
 \cite{tsukamoto-to-appear}.
The definitions of proper and strongly proper dyadic subbases
 will be recalled in the next section.
%For a bottomed sequence $\sigma\in \{0,1,\bot\}^{\omega}$,
% we set $\dom(\sigma):=\{k<\omega : \sigma(k)\neq\bot\}$.
%A bottomed sequence $\sigma$ is \emph{compact} if $\dom(\sigma)$ is finite.
%A dyadic subbase $S$ is  \emph{proper} if 
% for all compact bottomed sequence $\sigma\in \{0,1,\bot\}^{\omega}$,
% $\bigcap_{k\in \dom(\sigma)} S_k^{\sigma(k)}$ and
% $\bigcup_{k\in \dom(\sigma)} S_k^{1-\sigma(k)}$ are the exteriors of each other.
%The notion of \emph{strongly proper} dyadic subbases has been defined,
% and when a strongly proper dyadic subbase $S$ is given,
% then a proper dyadic subbase of 
% $\bigcap_{n\in \Gamma} X\setminus (S_n^0\cup S_n^1)$
% can be obtained inductively
% for all finite set $\Gamma\subseteq\omega$ \cite{tsukamoto-to-appear}.

We study a condition which ensures the existence of strongly proper dyadic subbases.
It has been proved that every second-countable regular Hausdorff space
 has a proper dyadic subbase \cite{ohta13,ohta11}.
First we give another proof of this fact that uses the metric
 induced by Urysohn's metrisation theorem.
Then we show that every locally compact separable metric space
 has a strongly proper dyadic subbase.

\section{Preliminaries}
\subsection{Bottomed sequences}
Let $\Sigma$ be a finite set containing the bottom character $\bot$,
 and $\omega$ the first infinite ordinal.
As usual, an element of $\omega$ is identified with
 the set of its predecessors.

%%%sequences
For an ordinal number $n\leq \omega$,
 $\Sigma^n$ denotes the set of sequences of
 elements of $\Sigma$ of length $n$,
 i.e., the maps from $n$ to $\Sigma$. 
We identify a sequence $\sigma \in \Sigma^n$
 with its infinite extension
\begin{equation*}
\sigma(k):=\left\{
 \begin{array}{ll}
  \sigma(k)& (k< n)\\
  \bot & (n\leq k<\omega).
 \end{array}
\right.
\end{equation*}
By this identification, we have $\Sigma^n\subseteq \Sigma^m$ if $n\leq m$.
%%%domain, length
For a sequence $\sigma\in \Sigma^{\omega}$,
 its domain is defined as $\dom(\sigma):=\{k: \sigma(k)\neq \bot\}$
 and its length as $\len(\sigma):=\min\{n\leq\omega: \dom(\sigma) \subseteq n\}$.
A sequence $\sigma\in \Sigma^{\omega}$ is \emph{compact}
 if $\len(\sigma)$ is finite.
The set of compact sequences is denoted by $\Sigma^*:=\bigcup_{n<\omega}\Sigma^n$.

%%%replacement, restriction, concatenation
Let $\sigma$ and $\tau$ be sequences of $\Sigma$,
 $a\in \Sigma$ an element  and $n<\omega$ a finite ordinal.
$\sigma[n\mapsto a]$ is the sequence
which maps $n$ to $a$ and equals $\sigma$ elsewhere.
%For every subset $\Gamma\subseteq\omega$,
%$\sigma|_{\Gamma}$ is the restriction of $\sigma$ to $\Gamma$.
If $\sigma$ is compact,
 then a concatenation $\sigma\tau\in \Sigma^{\omega}$ is defined as
\begin{equation*}
\sigma\tau (k):=\left\{
 \begin{array}{ll}
  \sigma(k)&(k< \len(\sigma))\\
  \tau(k-\len(\sigma))&(\len(\sigma)\leq k<\omega).
 \end{array}\right.
\end{equation*}
The $n$-fold concatenation of $\sigma$ with itself is denoted by $\sigma^n$.
Elements $a\in \Sigma$ are identified with sequences of length one.

\subsection{The domain $\mathbb{T}^{\omega}$}
We set $\mathbb{T}:=\{0,1,\bot\}$ ordered by $\bot\sqsubseteq0$ and $\bot\sqsubseteq 1$.
A $T_0$ topology of $\mathbb{T}$ is defined as $\{\emptyset, \{0\},\{1\},\{0,1\},\mathbb{T}\}$.
We equip $\mathbb{T}^{\omega}$
 with the product order and the product topology.
For all $\sigma\in \mathbb{T}^{\omega}$,
 we set $\upper\sigma:=\{\tau\in \mathbb{T}^{\omega}:\tau\sqsupseteq \sigma\}$,
 and the family $\{\upper\sigma:\sigma\in \mathbb{T}^*\}$ forms
 a base of $\mathbb{T}^{\omega}$ \cite{plotkin78}.

\subsection{Dyadic subbases}
We review proper and strongly proper dyadic subbases
 \cite{tsuiki04b,tsuiki15,tsukamoto-to-appear}.
In this section,
 $X=(X,\mathfrak{O})$ is a second-countable Hausdorff space.
%%topological notations
For a subset $A$ of $X$,
 $\cl A$ denotes the closure of $A$, $\inte A$ the interior of $A$.
The exterior of a subset $A\subseteq X$ is
 the set $\inte (X\setminus A)$ or equivalently $X\setminus \cl A$.

\begin{defi}\label{def:dyadicsubbase}
A \emph{dyadic subbase} of $X$ is
 a map $S:\omega\times \{0,1\}\rightarrow \mathfrak{O}$
 such that
\begin{enumerate}
\item $\{S(n,a):  n<\omega,\,a\in \{0,1\}\}$ is a subbase of $X$,
\item $S(n,0)\cap S(n,1)=\emptyset$ for all $n<\omega.$
\end{enumerate}
\end{defi}
For readability, $S(n,a)$ is denoted by $S_n^a$.
We set $S_n^{\bd}:=X\setminus(S_n^0\cup S_n^1)$ for $n<\omega$.
If $S_n^0$ and $S_n^1$ are the exteriors of each other,
then $S_n^{\bd}$ is their common boundary,
and we have $\cl S_n^a=S_n^a\cup S_n^{\bd}$ for $a\in \{0,1,\bd \}$.
We use the notations
\begin{align}
S(\sigma)&:=\widebigcap{k\in \dom(\sigma)}S_k^{\sigma(k)}, \label{eq:s-open}
\end{align}
\begin{align}
\bar{S}(\sigma)&:=\widebigcap{k\in \dom(\sigma)}
 (S_k^{\sigma(k)}\cup S_k^{\bd})\label{eq:s-closed}
\end{align}
 for all $\sigma\in \{0,1,\bd,\bot\}^{\omega}$.
\begin{defi}\label{dfn:PandI}
A dyadic subbase $S$ of $X$ is
\begin{enumerate}
\item \emph{proper} if $\bar{S}(\sigma)=\cl S(\sigma)
\text{ for all } \sigma\in \mathbb{T}^*$.
\item \emph{strongly proper} if
 $\bar{S}(\sigma)=\cl S(\sigma)$  for all $\sigma\in\{0,1,\bd,\bot\}^*$.
\end{enumerate}
\end{defi}

For all $\sigma\in \mathbb{T}^*$, by De Morgan's law, we obtain 
\begin{equation*}
\bar{S}(\sigma)=\widebigcap{k\in \dom(\sigma)} (S_k^{\sigma(k)}\cup S_k^{\bd})
=\widebigcap{k\in \dom(\sigma)} X\setminus S_k^{1-\sigma(k)}
=X\setminus\widebigcup{k\in \dom(\sigma)} S_k^{1-\sigma(k)},
\end{equation*}
and therefore, we have
 $X\setminus\bar{S}(\sigma)=\bigcup_{k\in \dom(\sigma)} S_k^{1-\sigma(k)}$.
We have the following characterization of proper dyadic subbases.
\begin{prop}\label{prop:ch-proper}
A dyadic subbase $S$ of $X$ is
 proper if and only if $S(\sigma)$ and $X\setminus \bar{S}(\sigma)$ 
 are the exteriors of each other for all $\sigma\in \mathbb{T}^*$.
\end{prop}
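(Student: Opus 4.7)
I would first unfold the phrase ``$S(\sigma)$ and $X\setminus \bar{S}(\sigma)$ are the exteriors of each other'' into the two symmetric equations $X\setminus\bar{S}(\sigma)=X\setminus\cl S(\sigma)$ and $S(\sigma)=X\setminus \cl(X\setminus\bar{S}(\sigma))$. The first is equivalent, upon taking complements, to $\bar{S}(\sigma)=\cl S(\sigma)$, which is precisely properness. So the ``$\Leftarrow$'' direction is immediate, and the whole content lies in showing that properness also forces the second equation.

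To prove $(\Rightarrow)$, I would first extract the single-coordinate consequence of properness: applying $\bar{S}(\sigma)=\cl S(\sigma)$ to $\sigma$ of length $1$ with value $a\in\{0,1\}$ at position $n$ gives $X\setminus S_n^{1-a}=\cl S_n^a$, i.e.\ $S_n^{1-a}=\inte(X\setminus S_n^a)$, so $S_n^0$ and $S_n^1$ are the exteriors of each other. By the remark after Definition~\ref{def:dyadicsubbase} this yields the identity $\cl S_n^a=S_n^a\cup S_n^{\bd}=X\setminus S_n^{1-a}$ that I will rely on below.

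The main computation is then direct. Using the identity $X\setminus\bar{S}(\sigma)=\bigcup_{k\in\dom(\sigma)}S_k^{1-\sigma(k)}$ already derived via De Morgan, and the fact that $\dom(\sigma)$ is finite because $\sigma\in\mathbb{T}^*$, I can commute closure with the union:
\begin{equation*}
\cl\bigl(X\setminus\bar{S}(\sigma)\bigr)
=\widebigcup{k\in\dom(\sigma)}\cl S_k^{1-\sigma(k)}
=\widebigcup{k\in\dom(\sigma)}\bigl(X\setminus S_k^{\sigma(k)}\bigr)
=X\setminus S(\sigma),
\end{equation*}
where the middle equality uses the single-coordinate consequence just established. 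Taking complements gives $S(\sigma)=X\setminus\cl(X\setminus\bar{S}(\sigma))$, which is the remaining equation.

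The only delicate point is the need for the single-coordinate version of properness to make $\cl S_k^{1-\sigma(k)}$ coincide with $X\setminus S_k^{\sigma(k)}$; without it the union above could properly contain $X\setminus S(\sigma)$. Finiteness of $\dom(\sigma)$ is crucial for pulling the closure through the union, and is exactly what is guaranteed by $\sigma\in\mathbb{T}^*$, so extending the statement to arbitrary $\sigma\in\{0,1,\bd,\bot\}^\omega$ would fail by this route, consistent with the fact that strong properness, not plain properness, governs infinite $\dom(\sigma)$.
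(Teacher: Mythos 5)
Your proof is correct and is essentially the paper's argument in De Morgan--dual form: where the paper establishes $S(\sigma)=\inte\bar{S}(\sigma)$ by distributing the interior over the finite intersection $\bigcap_{k\in\dom(\sigma)}(S_k^{\sigma(k)}\cup S_k^{\bd})$, you establish the complementary identity $\cl(X\setminus\bar{S}(\sigma))=X\setminus S(\sigma)$ by distributing the closure over the finite union $\bigcup_{k\in\dom(\sigma)}S_k^{1-\sigma(k)}$. Both versions split the equivalence the same way and rest on the same two ingredients---the single-coordinate consequence of properness ($\cl S_n^a=X\setminus S_n^{1-a}$, equivalently $S_n^a=\inte(S_n^a\cup S_n^{\bd})$) and the finiteness of $\dom(\sigma)$---so the content is identical.
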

\begin{proof}
For all $\sigma\in \mathbb{T}^*$, we have $\bar{S}(\sigma) =\cl S(\sigma)$
 if and only if $X\setminus \bar{S}(\sigma)=X\setminus\cl S(\sigma)$
 if and only if $X\setminus \bar{S}(\sigma)$ is the exterior of  $S(\sigma)$.

We show that if $S$ is proper,
 then for all $\sigma\in \mathbb{T}^*$, $S(\sigma)$ is the exterior of $X\setminus \bar{S}(\sigma)$,
i.e., $S(\sigma)$ is the interior of $\bar{S}(\sigma)$.
Suppose that $S$ is proper.
For all $n<\omega$ and $a\in \{0,1\}$,
 we have $\cl S_n^{1-a}=S_n^{1-a}\cup S_n^{\bd}=X\setminus S_n^a$,
 and therefore, 
\begin{equation*}
S_n^a=X\setminus \cl S_n^{1-a} = \inte (X\setminus S_n^{1-a})=
\inte (S_n^a \cup S_n^{\bd}).
\end{equation*}
Hence, for all $\sigma\in \mathbb{T}^*$, we obtain
\begin{equation}
\inte\bar{S}(\sigma)
%( X\setminus \bigcup_{k\in \dom(\sigma)}S_k^{1-\sigma(k)} )
= \inte \widebigcap{k\in \dom(\sigma)} (S_k^{\sigma(k)}\cup S_k^{\bd})
= \widebigcap{k\in \dom(\sigma)} \inte (S_k^{\sigma(k)}\cup S_k^{\bd})
= \widebigcap{k\in \dom(\sigma)} S_k^{\sigma(k)}
= S(\sigma).\tag*{\qEd}
\end{equation}
%\qed
\def\popQED{}
\end{proof}

Every sequence $\sigma\in \{0,1,\bd,\bot\}^*$ is decomposed into two sequences
\begin{align}
\sigma_{0,1}(n)&:=\left\{\begin{array}{ll}
\sigma(n)&(\text{if }\sigma(n)\in \{0,1\})\\ 
\bot &(\text{otherwise})\end{array}\right.
\quad \text{and}\\
\sigma_{\bd}(n)&:=\left\{\begin{array}{ll}
\sigma(n)&(\text{if }\sigma(n)=\bd)\\ 
\bot &(\text{otherwise}).\end{array}\right.
\end{align}
We have $S(\sigma)=S(\sigma_{0,1})\cap S(\sigma_{\bd})$,
 and $S(\sigma)$ is an open subset of $S(\sigma_{\bd})$.
Since $S(\sigma_{\bd})$ is closed,
 we have $\bar{S}(\sigma)=\bar{S}(\sigma_{0,1})\cap S(\sigma_{\bd})$.
Similarly to Proposition \ref{prop:ch-proper},
 if $S$ is strongly proper,
 then $S(\sigma)$ and $S(\sigma_{\bd})\setminus \bar{S}(\sigma)$
 are the exteriors of each other in the space $S(\sigma_{\bd})$.
%We have the following.
\begin{prop}
A dyadic subbase $S$ of $X$ is
strongly proper if and only if  for all $\sigma\in \{0,1,\bd,\bot\}^*$,
 $S(\sigma)$ and $S(\sigma_{\bd})\setminus \bar{S}(\sigma)$
 are the exteriors of each other in the space $S(\sigma_{\bd})$.
\end{prop}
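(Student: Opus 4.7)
The plan is to relativise the proof of Proposition~\ref{prop:ch-proper} to the closed subspace $Y:=S(\sigma_{\bd})$. Since $Y$ is closed in $X$, closure in $Y$ coincides with closure in $X$ on subsets of $Y$, and ``$S(\sigma)$ and $Y\setminus\bar{S}(\sigma)$ are the exteriors of each other in $Y$'' unpacks to the conjunction of (a) $S(\sigma)=\inte_Y\bar{S}(\sigma)$ and (b) $\bar{S}(\sigma)=\cl S(\sigma)$, both required for every $\sigma\in\{0,1,\bd,\bot\}^*$. Both $S(\sigma)$ and $\bar{S}(\sigma)$ visibly lie inside $Y$ by the factorisations recalled before the proposition.

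The backward direction will then be immediate, since (b) is exactly the definition of strong properness.

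For the forward direction, fix $\sigma$ and assume $S$ is strongly proper. Condition (b) is strong properness at $\sigma$ itself. For (a), I factor $\bar{S}(\sigma)=\bar{S}(\sigma_{0,1})\cap Y=\bigcap_{k\in\dom(\sigma_{0,1})}\bigl[(S_k^{\sigma(k)}\cup S_k^{\bd})\cap Y\bigr]$. As $\dom(\sigma_{0,1})$ is finite, $\inte_Y$ distributes over the intersection, so (a) reduces to the single-index identity
\[
\inte_Y\bigl[(S_k^{\sigma(k)}\cup S_k^{\bd})\cap Y\bigr]=S_k^{\sigma(k)}\cap Y\qquad(k\in\dom(\sigma_{0,1})).
\]

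The key step is to derive this identity by localising the strong-properness hypothesis to the single index $k$. I apply it to the auxiliary sequence $\sigma'$ defined by $\sigma'(j)=\bd$ for $j\in\dom(\sigma_{\bd})$, $\sigma'(k)=1-\sigma(k)$, and $\sigma'(j)=\bot$ otherwise. Then $\sigma'_{\bd}=\sigma_{\bd}$ while $\sigma'_{0,1}$ is supported only at $k$, so $\bar{S}(\sigma')=(S_k^{1-\sigma(k)}\cup S_k^{\bd})\cap Y$ and $S(\sigma')=S_k^{1-\sigma(k)}\cap Y$. Strong properness at $\sigma'$ reads $(S_k^{1-\sigma(k)}\cup S_k^{\bd})\cap Y=\cl(S_k^{1-\sigma(k)}\cap Y)$, and taking complements in $Y$ together with $X\setminus(S_k^{1-\sigma(k)}\cup S_k^{\bd})=S_k^{\sigma(k)}$ yields the displayed identity. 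The main obstacle is only this correct choice of $\sigma'$; the remainder is bookkeeping between $\cl_Y$ and $\cl$ on the closed set $Y$ and the distributivity of $\inte_Y$ over finite intersection, which is formally parallel to Proposition~\ref{prop:ch-proper}.
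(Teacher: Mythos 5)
Your proof is correct and follows essentially the same route as the paper: the equivalence of condition (b) with one of the two exterior statements is the paper's opening chain of equivalences, and your explicit argument for (a) via the auxiliary sequence $\sigma'$ (the $\bd$-part of $\sigma$ together with the single flipped digit $1-\sigma(k)$) is exactly the relativisation to $S(\sigma_{\bd})$ that the paper compresses into ``similarly to the proof of Proposition~\ref{prop:ch-proper}''. No gaps; you have merely spelled out the step the paper leaves implicit.
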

%The proof is similar to that of Proposition \ref{prop:ch-proper}, and is omitted.
\begin{proof}
Since $S(\sigma_{\bd})$ is closed,
%% we obtain $\bar{S}(\sigma)=\bar{S}(\sigma_{0,1})\cap S(\sigma_{\bd})$ and
 $\cl (S(\sigma_{0,1})\cap S(\sigma_{\bd}))$ equals the closure of 
 $S(\sigma_{0,1})\cap S(\sigma_{\bd})$ in the space $S(\sigma_{\bd})$.
Therefore, we have $\cl S(\sigma)=\bar{S}(\sigma)$
if and only if
 $S(\sigma_{\bd})\setminus \bar{S}(\sigma)= S(\sigma_{\bd})\setminus \cl S(\sigma)$
if and only if $S(\sigma_{\bd})\setminus \bar{S}(\sigma)$ is the 
exterior of $S(\sigma)$ in the space $S(\sigma_{\bd})$.
Similarly to the proof of Proposition \ref{prop:ch-proper}, we can see that
 if $S$ is strongly proper, then $S(\sigma)$ is the interior of $\bar{S}(\sigma)$
 in the space $S(\sigma_{\bd})$.
\end{proof}

Let $S$ be a dyadic subbase of $X$.
We define a map
 $\varphi_S: X \rightarrow \mathbb{T}^{\omega}$ as
\begin{equation}
\varphi_S(x)(n):=\left\{\begin{array}{ll}
0 & (x\in S_n^0)\\ 1 & (x\in S_n^1)\\ \bot & (\text{otherwise})
\end{array}\right. 
\end{equation}
 for $x\in X$ and $n<\omega$.
We set
\begin{align*}
K_S&:=\{ \varphi_S(x)|_n : x\in X,\, n<\omega \},\\
D_S&:=\{\sigma \in \mathbb{T}^{\omega}:
 (\forall n<\omega)(\sigma|_n \in K_S) \}.
\end{align*}
The set $D_S$ is an algebraic pointed dcpo that is the ideal completion of $K_S$.
%Clearly, $\varphi_S(X)$ is a subset of $D_S$.
%Moreover, the map $\varphi_S:X\rightarrow D_S$ is continuous.
%Hence, if $D_S$ is a Scott domain,
% then we have another domain representation $(D_S, \varphi_S)$ of $X$.
We quote some results known from \cite{tsuiki15, tsukamoto-to-appear}:
If $S$ is a proper dyadic subbase of a regular Hausdorff space $X$,
 then $X$ is embedded in the space of minimal elements of $D_S\setminus K_S$.
Moreover, if $X$ is compact in addition, then 
 we have a quotient map $\rho_S:D_S\setminus K_S \rightarrow X$
 and $X$ is homeomorphic to the space of minimal elements of $D_S\setminus K_S$.
The triple $(D_S, D_S\setminus K_S,\rho_S)$ is a domain representation of $X$.
If $K_S$ is a conditional upper semilattice with least element (cusl),
 then the domain representation is admissible (\cite{tsuiki15}).
Whether $K_S$ is a cusl depends not only on
 the subbase $\{S_n^a:n<\omega,a\in \{0,1\}\}$ itself
% $S$ itself
 but also on the enumeration of $S$.
It has been proved that  $K_S$ is a cusl regardless of the enumeration of $S$
 if and only if $S$ is strongly proper (\cite{tsukamoto-to-appear}).
We refer the reader to \cite{blanck00}
for the notion of domain representations of topological spaces.

% need not always be consistently complete.
%Even if $D_S$ is a Scott domain, changing the enumeration of $S$ might cause
% $D_S$ not to be consistently complete.
%Its converse does not hold,
% but if $S$ is proper and $D_S$ is a Scott domain regardless of the enumeration of $S$,
% then $S$ is strongly proper \cite{tsukamoto-to-appear}.
%Precisely, we have the following \cite{tsukamoto-to-appear}.
%\begin{prop}\label{prop:st-proper}
%Suppose that $S$ is a proper dyadic subbase
% of a Hausdorff space $X$.
%The following are equivalent.
%\begin{enumerate}
%\item $S$ is strongly proper.
%\item For all permutations
% $\pi:\omega\xrightarrow{\sim}\omega$,
% $D_{S\pi}$ is a Scott domain, where $S\pi$ is the dyadic subbase
% defined as
% $S\pi_n^a:=S_{\pi(n)}^a$ for $n<\omega$ and $a\in\{0,1\}$.
%\end{enumerate}
%\end{prop}

We give an example of a strongly proper dyadic subbase
 and show some modifications. 
\begin{exa}[The Gray subbase of the unit interval]
Let $I$ be the unit interval $[0,1]\subseteq \mathbb{R}$.
%We define a tent function $t:X\rightarrow X$ as
%\begin{equation*}
%t(x)=\left\{\begin{array}{ll}2x&(0\leq x\leq \frac{1}{2})\\ 2-2x&(\frac{1}{2}<x\leq 1)\end{array}\right..
%\end{equation*}
%For every positive integer $n$,
% we define $G_n^0=t^{-1}(G_{n-1}^0)$ and $G_n^1=t^{-1}(G_{n-1}^1)$.
For every non-negative integer $n$, we define a function $f_n:I\rightarrow\mathbb{R}$ as
$f_n(x):=-\cos(2^n\pi x)$.
We set $G_n^0:=f_n^{-1}((-\infty,0))$ and $G_n^1:=f_n^{-1}((0,\infty))$ for all $n<\omega$.
The family $\{G_n^a:n<\omega,a\in\{0,1\}\}$ forms a subbase of $I$, the {\em Gray subbase.}

We show that the Gray subbase is a strongly proper dyadic subbase.
For every $n<\omega$, we have $f_n^{-1}(0)=\{(2k+1)/2^{n+1}: k=0,1,\dots,2^n-1\}$,
 and the set $G_n^{\bd}=f_n^{-1}(0)$ is the common boundary of $G_n^0$ and $G_n^1$.
Note that $G_m^{\bd}$ and $G_n^{\bd}$ are disjoint if $m\neq n$.
Let $\sigma\in\{0,1,\bd,\bot\}^*$ be a bottomed sequence.
We have $\bar{G}(\sigma)\supseteq \cl G(\sigma)$
 because $\bar{G}(\sigma)$ is closed and contains $G(\sigma)$.
Suppose that $x$ belongs to $\bar{G}(\sigma)$,
 and we assume $x\in G_m^{\bd}$ for an $m\in\dom(\sigma)$,
 otherwise $x$ belongs to $G(\sigma)$.
Since $G_m^{\bd}$ and $G_k^{\bd}$ are disjoint
 for all $k\in \dom(\sigma)\setminus\{m\}$,
 we get $x\in \cl G_m^{\sigma(m)}\cap G(\sigma[m\mapsto\bot])$
 and $\sigma[m\mapsto\bot]\in\mathbb{T}^*$.
For every open neighbourhood $V$ of $x$, we have
 $V\cap \cl G_m^{\sigma(m)}\cap G(\sigma[m\mapsto\bot])\neq \emptyset$.
Since $V\cap G(\sigma[m\mapsto\bot])$ is open,
 we get $V\cap G_m^{\sigma(m)}\cap G(\sigma[m\mapsto\bot])\neq \emptyset$.
Therefore, every open neighbourhood $V$ of $x$ intersects
 with $G_m^{\sigma(m)}\cap G(\sigma[m\mapsto \bot])=G(\sigma)$ non-trivially.
Hence, we obtain $x\in\cl G(\sigma)$.
\end{exa}

\begin{rem}
We allow duplications in dyadic subbases,
% i.e.,
% we can enumerate a pair in the subbase two or more times.
%Therefore,
 and a one point set $\{x\}$ has a strongly proper dyadic subbase given by
 $S_n^0=\{x\}$ and $S_n^1=\emptyset$ for all $n<\omega$.
However, such a duplication induces non-properness in general.
Let $G$ be the Gray subbase of the unit interval.
We have $\cl(G_n^0 \cap G_n^1)=\emptyset$
 and $\cl G_n^0\cap \cl G_n^1=G_n^{\bd}\neq\emptyset$.
Therefore, if we enumerate a pair in $G$ twice, then we obtain a non-proper dyadic subbase.
\end{rem}

\begin{exa}
Let $I$ be the unit interval with the Gray subbase $G$.
We define $X$ as the one point compactification of $I\setminus\{1/4,3/4\}$,
 and let $p$ be the added point.
Since we have $\varphi_G(1/4)=0\bot10^{\omega}$ and $\varphi_G(3/4)=1\bot10^{\omega}$, 
$X$ has a dyadic subbase $S$ given by
\begin{equation*}
\varphi_S(x):=\left\{
 \begin{array}{ll}\bot\bot10^{\omega} &(\text{if }x=p)\\
 \varphi_G(x)&(\text{otherwise}).\end{array}
\right.
\end{equation*}
We have $p\in \cl (S_0^a\cap S_1^b)$ for all $a,b\in \{0,1\}$,
 and the properness of $S$ can be proved similarly.
However, $S$ is not strongly proper because we have $p\in S_0^{\bd}\cap \cl S_1^0$
 whereas $p\not\in \cl (S_0^{\bd}\cap S_1^0)$.
\end{exa}

\section{Existence of strongly proper dyadic subbases}
We will show the following.
\begin{thm}\label{thm:locally-compact-st-proper}
Every locally compact separable metric space has a strongly proper dyadic subbase. 
\end{thm}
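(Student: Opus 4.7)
The plan is to construct the strongly proper dyadic subbase using metric spheres with carefully chosen radii, via a diagonal (Baire-category) argument.

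First, I would fix a compatible metric $d$ on $X$, which we may take to be proper (closed bounded sets compact, as $X$ is locally compact separable metric), and a countable dense subset $D=\{p_i\}_{i<\omega}\subseteq X$. The subbase pairs will be of the form $S_n^0=\{x:d(x,p_{i(n)})<r_n\}$ and $S_n^1=\{x:d(x,p_{i(n)})>r_n\}$, so that $S_n^{\bd}=\{x:d(x,p_{i(n)})=r_n\}$ is a metric sphere. The task is to choose indices $i(n)<\omega$ and positive rationals $r_n$ inductively so that (i) the resulting family is a subbase of $X$ (satisfying the separation requirements enumerated in advance), and (ii) it is strongly proper.

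Second, the key observation is that for any $p\in X$ and any closed set $F\subseteq X$, at most countably many radii $r>0$ can make $\{x\in F:d(x,p)=r\}$ have nonempty relative interior in $F$, because such spheres would yield an uncountable collection of pairwise disjoint open subsets of $F$, contradicting second-countability. Iterating this over finite configurations of points from $D$ and closed sets built from previously-chosen pairs, the set of \emph{bad} radii at each inductive step is at most countable. Hence we can always choose $r_n$ at stage $n$ avoiding every bad radius simultaneously, while also satisfying a finite portion of the separation enumeration.

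Third, with the subbase constructed, I would verify that $\bar{S}(\sigma)=\cl S(\sigma)$ for all $\sigma\in\{0,1,\bd,\bot\}^*$, following the template of the Gray subbase example: take $x\in\bar{S}(\sigma)$, decompose $\sigma=\sigma_{0,1}\cdot\sigma_{\bd}$ as in Section 2, and show $x\in\cl S(\sigma)$. The transversality built into the construction guarantees that within any neighbourhood $V$ of $x$, the intersection $V\cap S(\sigma_{\bd})$ is the closure of the set where the strict inequalities encoded by $\sigma_{0,1}$ also hold; combined with $x\in\bar{S}(\sigma_{0,1})$, this produces a point of $V\cap S(\sigma)$. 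Local compactness enters here to restrict the analysis to compact neighbourhoods where finite-dimensional intersection arguments can be applied.

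The main obstacle is formulating the correct inductive condition on the new radius $r_n$: merely requiring nowhere-density of intersections of spheres is not obviously enough to imply $\bar{S}(\sigma)=\cl S(\sigma)$ for \emph{all} $\sigma$ involving $\bd$. A refined transversality condition, mirroring the role of pairwise-disjoint boundaries in the Gray subbase, must be isolated, and its stability under inductive extensions verified. The delicate interplay between the countability of bad radii, the enumeration of subbase requirements, and the preservation of strong properness across all finite decorations $\sigma\in\{0,1,\bd,\bot\}^*$ is the technical heart of the argument.
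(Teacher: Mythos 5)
Your starting point coincides with the paper's: the subbase pairs are built from distance functions $f_n(x)=d(x,p_{i(n)})$ to a countable dense set, and the radii are chosen inductively so as to avoid a ``small'' set of bad values at each stage. The gap is in what you take ``bad'' to mean. Your key observation --- that for a closed $F$ only countably many radii make the sphere $\{x\in F:d(x,p)=r\}$ have nonempty relative interior in $F$ --- is correct but is not the condition that (strong) properness requires. What is needed is that $c_n$ fail to be a \emph{local extremum} of $f_n$ restricted to the sets $\bar S(\sigma)$, and a sphere can have empty relative interior in $F$ while its radius is still a local extremum there: take $F=[0,1/2]\subseteq[0,1]$, $f=d(\cdot,0)$ and $c=1/2$; then $F\cap f^{-1}(c)=\{1/2\}$ is codense in $F$, yet $F\cap U^1(f,c)=\emptyset$, so $F\setminus U^{0}(f,c)\neq\cl(F\cap U^{1}(f,c))$ and properness already fails. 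So countability of your bad set does not yield the conclusion; the relevant countable set is instead the set of local extremum values (Lemma \ref{lem:countable-extrema}).

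The deeper missing piece --- which you flag as ``the technical heart'' but do not supply --- is the \emph{forward} constraint: when you pick $r_m$ at stage $m$, every previously fixed $c_n$ must fail to be a local extremum of $f_n$ restricted to the new fibres $S(\tau)\cap f_\upsilon^{-1}(p)$ cut out by intersections of later spheres. The set of bad parameters here is not shown to be countable by any disjoint-open-sets argument; the paper only establishes that it is \emph{meagre} in $\mathbb{R}^{r}$, and this is exactly where local compactness enters: one introduces fat points, proves via the Baire category theorem that a map with no fat points has codense image (Proposition \ref{prop:interior-fat-existence}, which covers $X$ by compact closures of basic open sets), deduces meagreness of the bad parameter set (Proposition \ref{prop:high-dim}), and propagates meagreness through the induction by slicing meagre subsets of $\mathbb{R}^{r+1}$ with hyperplanes (Proposition \ref{prop:meagre-fibre-meagre}). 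Your proposal contains no substitute for this machinery, and without it the inductive choice of $r_n$ cannot be completed, nor can the final verification of $\bar S(\sigma)=\cl S(\sigma)$ for $\sigma$ containing several $\bd$ entries (which requires the condition on fibres of maps into $\mathbb{R}^r$ for $r>1$, not just on single spheres).
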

Every separable metric space is second-countable and regular Hausdorff.
Urysohn's metrization theorem states that 
every second-countable regular Hausdorff space is metrisable.
Therefore, Theorem \ref{thm:locally-compact-st-proper} states that
 every locally compact second-countable Hausdorff space
 has a strongly proper dyadic subbase.
It is still an open problem whether every separable metric space has a strongly proper dyadic subbase.
%The hypothesis of local compactness is used in the proof of Proposition \ref{prop:high-dim}.

\subsection{Existence of proper dyadic subbases}
First, we show the following.
\begin{prop}\label{prop:existence-proper}
Every separable metric space $X=(X,d)$ has a proper dyadic subbase.
\end{prop}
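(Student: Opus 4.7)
The plan is to build the pairs of the dyadic subbase from distance functions to a countable dense subset. Let $\{x_i\}_{i<\omega}$ be such a set and write $g_i(z):=d(z,x_i)$. For each $i<\omega$ and each positive rational $q$, consider the pair
\[
S_{i,q}^0 := g_i^{-1}([0,q)) = B(x_i,q),\qquad S_{i,q}^1 := g_i^{-1}((q,\infty)).
\]
These are disjoint open subsets of $X$, and since the open balls $B(x_i,q)$ already form a countable basis of $X$, any enumeration of the pairs $(i,q)$ gives a dyadic subbase $S$.

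To verify that $S$ is proper, I would check that $\bar S(\sigma) = \cl S(\sigma)$ for every $\sigma\in\mathbb{T}^*$. The inclusion $\bar S(\sigma)\supseteq\cl S(\sigma)$ is automatic. For the reverse, unpacking the construction shows that $y\in\bar S(\sigma)$ amounts to $d(y,x_{i_k})\le q_k$ when $\sigma(k)=0$ and $d(y,x_{i_k})\ge q_k$ when $\sigma(k)=1$, for every $k\in\dom\sigma$. I then need to produce, in every neighbourhood $V$ of $y$, a point $z$ satisfying the corresponding strict inequalities simultaneously. The density of $\{x_i\}$ supplies candidate approximants of $y$, and I would use the metric structure (in particular, the $1$-Lipschitz property of each $g_i$) to adjust them so that all strict inequalities hold at once.

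The main obstacle is precisely this simultaneous perturbation: in a general metric space, a point $y$ lying on several spheres $g_{i_k}^{-1}(q_k)$ at once may admit no local move that turns every weak inequality strict, since shifting $z$ to decrease $d(\cdot,x_{i_k})$ can simultaneously decrease $d(\cdot,x_{i_\ell})$ below a threshold it is supposed to stay above. To overcome this, I would restrict the rational radii used for each centre $x_i$ to a countable dense subset of $(0,\infty)$ chosen to avoid the ``critical'' values of $g_i$ -- those $q$ at which some point of $g_i^{-1}(q)$ fails to be bilaterally approached by $B(x_i,q)$ and its exterior. Such critical values can be shown to be countable, using in particular that $[0,\infty)\setminus g_i(X)$ has only countably many connected components. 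With this restricted family of radii, the basis property survives and each single-coordinate pair is individually well-behaved; extending this to the simultaneous case by an iterative perturbation argument, in which the density of $\{x_i\}$ compensates for the loss of strictness in each coordinate one at a time, will be the delicate part of the proof.
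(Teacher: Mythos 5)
Your setup coincides with the paper's (distance functions $g_i = d(\cdot,x_i)$ to a countable dense set, cut at values taken from a countable dense family of radii, with the pair $B(x_i,q)$, $g_i^{-1}((q,\infty))$), and you have correctly located the danger in the locally extremal (``critical'') values. But there is a genuine gap exactly where you say the delicate part lies, and your plan for closing it is aimed at the wrong target. Choosing, \emph{in advance and independently for each centre $x_i$}, a set of radii avoiding the critical values of $g_i$ on $X$ only guarantees that each single pair $S_{i,q}^0$, $S_{i,q}^1$ consists of mutual exteriors. It does not control the intersections: a value $q$ can fail to be a local extremum of $g_i$ on $X$ and yet be a local extremum of $g_i$ restricted to the closed set $\cl S(\sigma)$ determined by finitely many \emph{other} pairs, and that is precisely the situation in which $\bar S(\sigma[n\mapsto a]) \supsetneq \cl S(\sigma[n\mapsto a])$. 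Since which sets $\cl S(\sigma)$ arise depends on the radii chosen for the other centres, no fixed per-centre countable exclusion set can anticipate them, and the ``iterative perturbation'' you defer to is where the whole difficulty sits; in a general metric space a point on several spheres need not admit any nearby point making all the required inequalities strict simultaneously.

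The paper's resolution is to make the choice of the cut values \emph{sequential}: having fixed $c_0,\dots,c_{n-1}$ (hence the finitely many sets $S(\sigma)$, $\sigma\in\mathbb{T}^n$, with $\bar S(\sigma)=\cl S(\sigma)$ by induction), one picks $c_n$ in the prescribed interval avoiding the local extrema of $f_n|_{\cl S(\sigma)}$ for \emph{all} $\sigma\in\mathbb{T}^n$ --- a countable set by the countable-base argument, so such a $c_n$ exists. The key lemma (Lemma~\ref{lem:non-extrema} in the paper) is the \emph{relative} statement: if $c$ is not a local extremum of $f|_{\cl A}$ then $\cl A\setminus U^{1-a}(f,c)=\cl(A\cap U^a(f,c))$. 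Applied with $A=S(\sigma)$ it yields $\bar S(\sigma[n\mapsto a])=\cl S(\sigma[n\mapsto a])$ in one step, with no perturbation argument at the verification stage. If you reorganise your proof so that the radii are chosen one at a time, each avoiding the local extrema of its distance function restricted to the finitely many closed cells already built, your approach becomes the paper's. (A secondary remark: counting the critical values via the connected components of $[0,\infty)\setminus g_i(X)$ addresses gaps in the image rather than local extrema; the correct count is that every local extremum is an extremum of $f|_B$ for some member $B$ of a countable base, giving countability directly.)
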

Proposition \ref{prop:existence-proper} has been proved already in \cite{ohta13,ohta11}.
Using the metric directly, we give another proof of this fact.
The proof is simpler than the previous proofs,
 and the idea is useful for proving Theorem \ref{thm:locally-compact-st-proper}.
Let $f:X\rightarrow \mathbb{R}$ be a function, $c$ a real number.
We use the notations
\begin{equation}
\begin{split}
U^0(f,c)&:=\{x\in X: f(x)<c\},\\
U^1(f,c)&:=\{x\in X: f(x)>c\},\\
U^{\bd}(f,c)&:=f^{-1}(c).
\end{split}
\end{equation}
We will construct a dyadic subbase
 $S:\omega \times \{0,1\} \rightarrow\mathfrak{O}$ of the form
\begin{equation*}%\label{eq:div}
S_n^a:=U^a(f_n,c_n)
\end{equation*}
 for every $n<\omega$ and $a\in\{0,1\}$,
 where $f_n:X\rightarrow \mathbb{R}$ is a continuous function
 and $c_n$ is a real number for every $n<\omega$.

We say that $c$ is a \emph{local maximum (resp. local minimum)} of $f$
 if $c$ is a maximum (resp. minimum) value of $f|_V$ for some open subset $V$.
Local maxima and local minima are collectively called \emph{local extrema}.
If $c$ is a local maximum of $f:X\rightarrow \mathbb{R}$,
 then there exists a point $x\in U^{\bd}(f,c)$ with its neighbourhood $V$
 such that $V\cap U^1(f,c)$ is empty.
The point $x$ belongs to neither $\cl U^1(f,c)$ nor $U^0(f,c)$.
Therefore, $U^0(f,c)$ is not the exterior of $U^1(f,c)$.
Similarly, if $c$ a local minimum,
 then there exists $y\not\in \cl U^0(f,c) \cup U^1(f,c)$.
Hence, local extrema should be avoided
 in order to obtain a proper dyadic subbase.
We do not fix real numbers $c_n$ first,
 but give open intervals $I_n$ from which $c_n$ will be taken.
\begin{lem}\label{lem:countable-fc}
There exist a sequence $(f_n)_{n<\omega}$ of continuous functions
and a sequence $(I_n)_{n<\omega}$ of open intervals such that
the family $\{U^a(f_n,c_n) : n<\omega,\, a\in\{0,1\}\}$ is a subbase of $X$
if $c_n\in I_n$ for $n<\omega$.
\end{lem}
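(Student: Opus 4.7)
The plan is to use the metric $d$ on $X$ to build Urysohn-style continuous functions indexed by a countable base of $X$, so that the threshold parameters $c_n$ can subsequently be picked from any open subinterval of $(0,1)$ without affecting the subbase property. The subbase condition will then follow from a standard base-refinement argument based on regularity.

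First I would fix a countable base $\mathcal{B}=\{B_n:n<\omega\}$ of $X$ and enumerate the index pairs $(m,n)\in\omega^2$ satisfying $\emptyset\neq \cl B_m\subseteq B_n\neq X$ as a sequence $((m_k,n_k))_{k<\omega}$. Because every metric space is regular, this list is rich enough to refine the topology: for every $x\in X$ and every open neighbourhood $W$ of $x$ with $W\neq X$, there exist indices $m,n$ with $x\in B_m\subseteq \cl B_m\subseteq B_n\subseteq W$, and such a pair appears in the enumeration.

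Next I would define, for each $k<\omega$,
\begin{equation*}
f_k(x):=\frac{d(x,\cl B_{m_k})}{d(x,\cl B_{m_k})+d(x,X\setminus B_{n_k})},
\end{equation*}
which is a continuous map $X\to[0,1]$ with non-vanishing denominator, since $\cl B_{m_k}$ and $X\setminus B_{n_k}$ are non-empty disjoint closed subsets of $X$. By construction, $f_k\equiv 0$ on $\cl B_{m_k}$ and $f_k\equiv 1$ on $X\setminus B_{n_k}$. I would then set $I_k:=(0,1)$.

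To finish, I would verify the subbase condition. For any choice $c_k\in I_k$ the inclusions $\cl B_{m_k}\subseteq U^0(f_k,c_k)\subseteq B_{n_k}$ hold, so the refinement observation above shows that $\{U^0(f_k,c_k):k<\omega\}$ is already a base of $X$; the family in the statement is therefore a subbase. No serious obstacle is anticipated here, since the argument amounts to the standard Urysohn-function base of a separable metric space, with the extra observation that $c_k$ can be left unfixed inside $(0,1)$ without disturbing the two inclusions. The genuine difficulty, namely the later task of choosing $c_k\in I_k$ so that $c_k$ is not a local extremum of $f_k$, and thereby the resulting dyadic subbase is proper, is deferred to the next step toward Proposition \ref{prop:existence-proper}.
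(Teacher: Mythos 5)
Your proof is correct, but it constructs the functions by a genuinely different route from the paper. The paper fixes a countable dense set $\{x_m\}$ in $X$ and a countable base $\{B_j\}$ of $\mathbb{R}_{>0}$ consisting of open intervals, and under a pairing $n\mapsto(n_0,n_1)$ sets $f_n(x):=d(x_{n_0},x)$ and $I_n:=B_{n_1}$; the sets $U^0(f_n,c_n)$ are then open balls whose centres are dense and whose radii (one picked from each $B_j$) are dense in $\mathbb{R}_{>0}$, and such balls form a base. You instead attach a normalized Urysohn-type function to each pair $\cl B_m\subseteq B_n$ of base elements of $X$. Your version buys the stronger conclusion that every $I_k$ may be taken to be all of $(0,1)$ and that $U^0(f_k,c_k)$ is sandwiched between the \emph{fixed} sets $\cl B_{m_k}$ and $B_{n_k}$ independently of $c_k$; the price is the regularity/shrinking argument, which the paper's distance-to-a-dense-point construction avoids entirely. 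Either version feeds equally well into the proofs of Proposition \ref{prop:existence-proper} and Theorem \ref{thm:locally-compact-st-proper}, which only need the $I_n$ to be nonempty open intervals. Two small points to tidy up: the set of admissible pairs $(m,n)$ is guaranteed to be infinite only when $X$ is infinite, so for finite $X$ you should repeat pairs (duplications are permitted) and handle the one-point space, where no pair satisfies $B_n\neq X$, as a trivial separate case; and since your refinement claim excludes $W=X$, you should remark that for $|X|\geq 2$ every point has a proper open neighbourhood, so the family $\{U^0(f_k,c_k)\}$ is nonetheless a base of the whole topology.
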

\begin{proof}
Since $X$ is separable, there exists a dense countable set
 $\{x_n\in X: n<\omega\}$.
Suppose that $\{B_n: n<\omega\}$ is a family consisting of open intervals
 that forms a base of the space $\mathbb{R}_{>0}$ of positive real numbers.
Note that if $b_n\in B_n$ for all $n<\omega$, then the set $\{b_n: n<\omega\}$
 is dense in $\mathbb{R}_{>0}$.
Let $n\mapsto (n_0,n_1)$ be a map from $\omega$ onto $\omega\times \omega$.
We define $f_n(x):=d(x_{n_0},x)$ and $I_n:=B_{n_1}$.
If $c_n\in I_n$ for all $n<\omega$, then the family $\{U^a(f_n,c_n): n<\omega,\, a\in\{0,1\} \}$ is a subbase of $X$
 because $\{U^0(f_n,c_n): n<\omega\}$ forms a base.
By definition, $U^0(f_n,c_n)$ and $U^1(f_n,c_n)$ are disjoint for all $n<\omega$.
\end{proof}

As the following lemma shows, the set of local extrema is countable.
\begin{lem}\label{lem:countable-extrema}
Every function $f:X\rightarrow\mathbb{R}$ has at most countably many local extrema.
\end{lem}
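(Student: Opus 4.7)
The plan is to exploit that a separable metric space is second-countable, so $X$ has a countable base $\mathcal{B}$. For each local maximum value $c$ of $f$, the definition supplies an open set $V\subseteq X$ on which $f$ attains the maximum $c$, together with a witness point $x\in V$ satisfying $f(x)=c$ and $f(y)\le c$ for all $y\in V$. Since $\mathcal{B}$ is a base, I would pick $B_c\in\mathcal{B}$ with $x\in B_c\subseteq V$; because $B_c\subseteq V$, we still have $f(y)\le c$ for all $y\in B_c$, while $f(x)=c$ with $x\in B_c$, so $c$ is actually the maximum of $f|_{B_c}$.

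Next I would observe that the assignment $c\mapsto B_c$ is injective: if two distinct local maximum values $c_1\ne c_2$ were associated with the same basic open set $B$, then each would be the maximum of $f|_B$, contradicting the uniqueness of the maximum of a real-valued function on a fixed set. Hence the set of local maximum values injects into the countable set $\mathcal{B}$, and the symmetric argument applied to $-f$ takes care of local minima; the union of the two countable sets is countable.

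No step is a genuine obstacle. The only point that requires care is to read the definition exactly as stated in the paper: a local extremum is an \emph{attained} maximum or minimum value, not merely a supremum. It is precisely the existence of the witness $x$ with $f(x)=c$ that allows the shrinking from $V$ to the basic open set $B_c$ to preserve the extremum property, and it is the uniqueness of the attained maximum that drives injectivity. Continuity of $f$ is never used.
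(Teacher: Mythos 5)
Your proof is correct and follows essentially the same route as the paper's: the paper also fixes a countable base $\mathfrak{B}$ and observes that each local extremum is an extremum of $f|_B$ for some $B\in\mathfrak{B}$, hence there are at most countably many. You have simply spelled out the two details the paper leaves implicit, namely the shrinking from $V$ to a basic $B_c$ containing the witness point and the injectivity of $c\mapsto B_c$ on local maxima (and separately on minima).
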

\begin{proof}
Let $\mathfrak{B}$ be a countable base of $X$.
Since each local extremum is an extremum of $f|_{B}$ for some $B\in\mathfrak{B}$,
 the number of local extrema of $f$ is countable.
\end{proof}

In order to obtain the properness property,
we have only to avoid local extrema of finitely many functions.
Therefore, we can avoid them.
\begin{lem}\label{lem:non-extrema}
Let $A$ be a subset of $X$,
 $f:X\rightarrow\mathbb{R}$ a continuous function,
 $c$ a real number.
If $c\in \mathbb{R}$ is not a local extremum of $f|_{\cl A}$,
 then we have $\cl A\setminus U^{1-a}(f,c) =\cl(A\cap U^a(f,c))$ for $a\in \{0,1\}$.
\end{lem}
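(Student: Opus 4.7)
The plan is to prove the two set-theoretic inclusions separately. By the symmetry that exchanges the roles of local minimum and local maximum, of $U^0$ and $U^1$, and of $<$ and $>$, it is enough to treat the case $a=0$, so the target equality is $\cl A\setminus U^1(f,c)=\cl(A\cap U^0(f,c))$. I would first dispatch the easy inclusion $\supseteq$ by observing that $X\setminus U^1(f,c)=f^{-1}((-\infty,c])$ is closed by continuity of $f$, and $\cl A$ is closed, so their intersection $\cl A\setminus U^1(f,c)$ is a closed set containing $A\cap U^0(f,c)$ and therefore contains its closure.

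For the reverse inclusion, I would fix $x\in\cl A$ with $f(x)\le c$ together with an arbitrary open neighbourhood $V$ of $x$, and look for a point of $A\cap U^0(f,c)$ inside $V$. If $f(x)<c$, then $V\cap U^0(f,c)$ is already an open neighbourhood of $x\in\cl A$ and therefore meets $A$. If $f(x)=c$, the non-extremum hypothesis intervenes: since $x\in V\cap \cl A$ witnesses that the value $c$ is attained by $f|_{V\cap\cl A}$, the assumption that $c$ is not a local minimum of $f|_{\cl A}$ forces $c$ not to be the minimum of $f|_{V\cap \cl A}$, so some $y\in V\cap\cl A$ must satisfy $f(y)<c$. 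Then $V\cap U^0(f,c)$ is an open neighbourhood of $y\in\cl A$, hence meets $A$ as required.

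The only genuinely delicate step is the boundary case $f(x)=c$: the hypothesis has to be unpacked first into the existence of an approximating point of $\cl A$ with strictly smaller $f$-value, and a second approximation against $\cl A$ is then needed to replace that point by an actual element of $A$. Nothing deeper is required, because openness of $U^0(f,c)$ lets it survive intersection with any neighbourhood, so once the auxiliary point $y$ has been produced the density argument from $\cl A$ into $A$ is automatic.
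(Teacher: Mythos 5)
Your proof is correct and follows essentially the same route as the paper: the easy inclusion via closedness of $\cl A\setminus U^{1-a}(f,c)$, and the reverse inclusion by using the non-extremum hypothesis to produce a point of $V\cap\cl A\cap U^a(f,c)$ and then exploiting openness of $V\cap U^a(f,c)$ to land in $A$ itself. Your explicit case split between $f(x)<c$ and $f(x)=c$ only makes precise a step the paper compresses into one sentence.
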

\begin{proof}
We can see
 $\cl A\setminus  U^{1-a}(f,c)
 \supseteq \cl A\cap \cl U^a(f,c)
 \supseteq \cl(A\cap U^a(f,c))$.
Suppose that $x\in \cl A\setminus  U^{1-a}(f,c)$
 and $V$ is an open neighbourhood of $x$.
Since $c$ is not a local extremum of $f|_{\cl A}$,
 there exists $y\in V\cap \cl A$ such that $y\in U^a(f,c)$,
 i.e., $V\cap \cl A\cap U^a(f,c) \neq \emptyset$.
Since $U^a(f,c)$ and $V$ are open, we have $V\cap A \cap U^a(f,c) \neq \emptyset$.
Since this holds for every open neighbourhood $V$ of $x$,
 we obtain $x\in \cl(A\cap U^a(f,c))$.
\end{proof}

\begin{proof}[Proof of Proposition \ref{prop:existence-proper}]
By Lemma \ref{lem:countable-fc}
 we can take a sequence $(f_n)_{n<\omega}$ of continuous functions
 and a sequence $(I_n)_{n<\omega}$ of open intervals, such that
 the family $\{U^a(f_n,c_n) : n<\omega,\, a\in\{0,1\}\}$ is a subbase of $X$
 if $c_n\in I_n$ for $n<\omega$.

First, we take $c_0\in I_0$ which is not a local extremum of $f_0$,
 and set $S_0^a:=U^a(f_0,c_0)$ for $a\in\{0,1\}$.
By Lemma \ref{lem:non-extrema},
 $S_0^0$ and $S_0^1$ are the exteriors of each other.

Let $n$ be a finite ordinal.
Suppose that we have obtained a family $\{S_k^a : k<n, a\in \{0,1\}\}$
 such that $\bar{S}(\sigma)=\cl S(\sigma)$ for all $\sigma \in \mathbb{T}^n$.
We take a real number $c_n\in I_n$ which is not a local extremum of
 $f_n|_{\bar{S}(\sigma)}$ for all $\sigma \in \mathbb{T}^n$.
We set $S_n^a:=U^a(f_n,c_n) $ for $a\in \{0,1\}$.
For all $\sigma\in \mathbb{T}^n$ and $a\in \{0,1\}$,
 we have
 \begin{equation*}
\bar{S}(\sigma[n\mapsto a])=\bar{S}(\sigma)\setminus S_n^{1-a}
 =\cl S(\sigma)\setminus S_n^{1-a}
\end{equation*}
 by the assumption.
Since $c_n$ is not a local extremum of $f_n|_{\cl S(\sigma)}$,
 by Lemma \ref{lem:non-extrema}, we obtain
\begin{equation*}
\cl S(\sigma) \setminus S_n^{1-a}= \cl (S(\sigma)\cap S_n^a)
= \cl S(\sigma[n\mapsto a]).
\end{equation*}
Therefore,
 $\bar{S}(\sigma)=\cl S(\sigma)$ holds for all $\sigma \in \mathbb{T}^{n+1}$.
Hence, we obtain a proper dyadic subbase inductively.
\end{proof}

\subsection{Fat points}
In the proof of Proposition \ref{prop:existence-proper},
 $c_n$ could be a local extremum of $f_n|_{f_m^{-1}(c_m)}$ for some $m,n<\omega$.
If $c_n$ is a local extremum of $f_n|_{f_m^{-1}(c_m)}$,
then $U^0(f_n,c_n)\cap U^{\bd}(f_m,c_m)$ and $U^1(f_n,c_n)\cap U^{\bd}(f_m,c_m)$
 are not the exteriors of each other in the space $U^{\bd}(f_m,c_m)$,
 and we fail to obtain the strong properness.
We have to avoid $c_m$ such that
 an already chosen $c_n$ will be a local extremum of $f_n|_{f_m^{-1}(c_m)}$.
In Section \ref{sec:proof-st-proper},
 we show that such real numbers can be avoided if the space $X$ is locally compact.

In the rest of this section, $X$ is a locally compact separable metric space.
\begin{defi}
A subset $A\subseteq X$ is
\begin{enumerate}
\renewcommand{\labelenumi}{(\roman{enumi})}
\item \emph{codense} if $\inte A=\emptyset$.
\item \emph{nowhere dense} if $\inte \cl A =\emptyset$.
\item \emph{meagre} if $A$ is a countable union of nowhere dense subsets.
\end{enumerate}
\end{defi}

Let $r$ be a non-negative integer, $f:X\rightarrow \mathbb{R}^r$ a continuous map.
We say that a point $x\in X$ is \emph{fat} with respect to $f$
 if $f(V)$ has an interior point for every neighbourhood $V$ of $x$.
The set of all fat points with respect to $f$ is denoted by $\fat_f X$.
For every subset $A\subseteq X$,
 $\fat_f A$ denotes the set of fat points of $A$ with respect to $f|_A$.
\begin{lem}\label{lem:fat-inclusion}
Let $f:X\rightarrow \mathbb{R}^r$ be a continuous map.
For every subset $A\subseteq X$,
 we have $\fat_f A\subseteq \fat_f X\cap A$.
\end{lem}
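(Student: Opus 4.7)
The plan is to prove the two inclusions $\fat_f A \subseteq A$ and $\fat_f A \subseteq \fat_f X$ separately; the first is immediate from the definition of $\fat_f A$ as a subset of $A$, so the real content is the second inclusion.

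To show $\fat_f A \subseteq \fat_f X$, I would take an arbitrary $x \in \fat_f A$ and an arbitrary neighbourhood $W$ of $x$ in $X$, and produce an interior point of $f(W)$. The natural candidate is supplied by restricting $W$ to $A$: the set $V := W \cap A$ is a neighbourhood of $x$ in the subspace $A$, so the hypothesis $x \in \fat_f A$, applied to the map $f|_A$, yields an interior point $p$ of $(f|_A)(V) = f(W \cap A)$ in $\mathbb{R}^r$.

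The final step is the observation that $f(W \cap A) \subseteq f(W)$, together with the elementary fact that if $p$ is an interior point of a set $E \subseteq \mathbb{R}^r$ and $E \subseteq E'$, then $p$ is also an interior point of $E'$: any open ball around $p$ contained in $E$ is automatically contained in $E'$. Applying this with $E = f(W \cap A)$ and $E' = f(W)$ shows $p \in \inte f(W)$, whence $x \in \fat_f X$.

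I do not expect a genuine obstacle here; the only point requiring care is distinguishing neighbourhoods in $X$ from neighbourhoods in the subspace $A$, and verifying that the notion of interior point in $\mathbb{R}^r$ is monotone with respect to set inclusion. No hypothesis beyond continuity of $f$ (and in fact not even that) is needed, so the local compactness and separability of $X$ play no role in this particular lemma.
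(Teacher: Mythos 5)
Your proof is correct and follows essentially the same route as the paper's: intersect an arbitrary neighbourhood $W$ of $x$ in $X$ with $A$ to get a subspace neighbourhood, apply fatness of $x$ in $A$ to get an interior point of $f(W\cap A)$, and conclude via $f(W\cap A)\subseteq f(W)$. The paper's proof is just a terser version of exactly this argument.
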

\begin{proof}
Let $x\in \fat_f A$ be a point, $V\subseteq X$ its open neighbourhood.
We can see $x\in A$.
Since $x$ belongs to $\fat_f A$, $f(A\cap V)$ has an interior point.
Therefore, $f(V)\supseteq f(A\cap V)$ also has an interior point,
 and hence $x\in \fat_f X$.
\end{proof}
By definition, $X$ has no fat point with respect to $f$ if $f(X)$ is codense.
We show its converse.
\begin{prop}\label{prop:interior-fat-existence}
Let $r$ be a non-negative integer, $f:X\rightarrow \mathbb{R}^r$ a continuous map.
If $\fat_f X$ is empty, then $f(X)$ is codense. 
\end{prop}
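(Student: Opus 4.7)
The plan is to realise $f(X)$ as a meagre subset of $\mathbb{R}^r$ and then invoke the Baire category theorem in $\mathbb{R}^r$ to conclude that $\inte f(X)=\emptyset$.

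First I would reformulate the hypothesis in a form suitable for a covering argument. Saying $\fat_f X=\emptyset$ means that every $x\in X$ has an open neighbourhood $V_x$ with $\inte f(V_x)=\emptyset$. Using local compactness, I would shrink $V_x$ to a compact neighbourhood $K_x\subseteq V_x$ of $x$. Because $f(K_x)\subseteq f(V_x)$, any open set contained in $f(K_x)$ is open in $f(V_x)$, so $\inte f(K_x)=\emptyset$ as well; and because $f(K_x)$ is compact and hence closed in $\mathbb{R}^r$, this already makes $f(K_x)$ nowhere dense.

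Next I would run a Lindel\"of argument. Since $X$ is separable metric, it is second countable, so the open cover $\{\inte K_x:x\in X\}$ admits a countable subcover. This produces a sequence of compact sets $(K_n)_{n<\omega}$ whose interiors cover $X$ and whose images $f(K_n)$ are each nowhere dense in $\mathbb{R}^r$. Therefore
\begin{equation*}
f(X)=\bigcup_{n<\omega}f(K_n)
\end{equation*}
is a countable union of nowhere dense sets, i.e., meagre in $\mathbb{R}^r$. Finally, $\mathbb{R}^r$ is a Baire space, so every meagre subset has empty interior; this yields $\inte f(X)=\emptyset$, which is precisely the codensity of $f(X)$.

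No step should be a real obstacle. The only delicate point is the promotion from ``neighbourhood with image of empty interior'' to ``compact neighbourhood with image of empty interior,'' which is precisely where the local compactness hypothesis on $X$ is used; after that the conclusion is just Lindel\"of combined with Baire.
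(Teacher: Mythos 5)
Your proof is correct and follows essentially the same route as the paper: cover $X$ by countably many compact neighbourhoods whose images are codense, observe that a compact codense subset of $\mathbb{R}^r$ is nowhere dense, and conclude via the Baire category theorem that $f(X)$ is meagre, hence codense. The only cosmetic difference is that the paper starts from a countable base of relatively compact open sets, whereas you extract the countable family by a Lindel\"of argument.
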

We make a remark about the case in which $r$ is zero.
$\mathbb{R}^0$ is a one point set
 and every point $x\in X$ is mapped to the same point by $f$.
Therefore, we have $\fat_f X=X$.
Note that for every subset $A$ of a one point set, we have
\begin{equation*}
A\text{ is codense }\Leftrightarrow A\text{ is nowhere dense } \Leftrightarrow
 A\text{ is meagre }
\Leftrightarrow A=\emptyset.
\end{equation*}
Therefore, Proposition \ref{prop:interior-fat-existence} holds in this case.

Baire category theorem states that
 every meagre subset of a complete metric space is codense.
Since $\mathbb{R}^r$ is a complete metric space, we have the following.
\begin{lem}[Baire category theorem]\label{BCT}
Every meagre subset of $\mathbb{R}^r$ is codense.\qed
\end{lem}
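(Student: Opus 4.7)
The plan is to prove the contrapositive: if $M \subseteq \mathbb{R}^r$ has nonempty interior, then $M$ cannot be a countable union of nowhere dense sets. Equivalently, I fix an arbitrary nonempty open ball $B_0$ in $\mathbb{R}^r$ and, given any countable family $\{N_n\}_{n<\omega}$ of nowhere dense subsets, produce a point of $B_0$ that lies outside $\bigcup_n N_n$. This suffices because $B_0$ is arbitrary.

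First, without loss of generality I would replace each $N_n$ by $\cl N_n$, which is still nowhere dense by the very definition $\inte \cl N_n = \emptyset$. This reduces to the case of closed sets with empty interior, which is the convenient form for the construction.

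Next, I would inductively build a nested sequence of open balls $B_n$ with $\cl B_n \subseteq B_{n-1}$, radii bounded by $1/n$, and $\cl B_n \cap N_n = \emptyset$. At step $n$, the previous ball $B_{n-1}$ is a nonempty open subset of $\mathbb{R}^r$, and since $N_n$ is closed with empty interior the set $B_{n-1} \setminus N_n$ is open and nonempty. Picking any point $x_n$ in it, the distance from $x_n$ to $N_n$ is strictly positive and $x_n$ lies in the open set $B_{n-1}$, so I may choose a radius $r_n < 1/n$ small enough that the closed ball of radius $r_n$ centred at $x_n$ is contained in $B_{n-1}$ and disjoint from $N_n$; take $B_n$ to be the corresponding open ball.

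Finally, I would invoke completeness of $\mathbb{R}^r$. The centres $x_n$ form a Cauchy sequence, because for $k, \ell \geq m$ both $x_k$ and $x_\ell$ lie in $B_m$, which has diameter at most $2/m$. Hence the sequence converges to some point $x \in \mathbb{R}^r$. Since $x_k \in \cl B_n$ for all $k \geq n$ and $\cl B_n$ is closed, $x \in \cl B_n$ for every $n$. In particular $x \in \cl B_1 \subseteq B_0$ and $x \notin N_n$ for all $n$, so $x \in B_0 \setminus \bigcup_n N_n$. The only delicate point in the argument is being careful at each inductive step to shrink the radius enough so that the \emph{closure} of $B_n$ (not merely $B_n$ itself) stays inside $B_{n-1}$ and avoids $N_n$; this is what makes the passage to closures at the outset convenient, and it is what guarantees that the Cauchy limit is genuinely captured by every $\cl B_n$.
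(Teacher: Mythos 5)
Your argument is correct: it is the classical nested-balls proof of the Baire category theorem for a complete metric space, specialised to $\mathbb{R}^r$. Note, however, that the paper does not prove this lemma at all --- it is stated with an immediate end-of-proof mark, justified only by quoting the general Baire category theorem for complete metric spaces and observing that $\mathbb{R}^r$ is complete. So what you have written is the standard textbook proof of the black box the paper invokes, which is a perfectly legitimate (and more self-contained) route. All the essential care is present in your write-up: reducing to closed nowhere dense sets, shrinking each ball so that its \emph{closure} lies in the previous ball and misses $N_n$, and using completeness to extract a limit point that survives in every $\cl B_n$ and hence lies in $B_0\setminus\bigcup_n N_n$. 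The only blemish is an off-by-one in the indexing: with radii bounded by $1/n$ your construction starts at $n=1$, so $N_0$ is never explicitly avoided; reindexing the family as $\{N_n\}_{n\geq 1}$, or bounding the radius at step $n$ by $1/(n+1)$ and starting at $n=0$, fixes this without affecting the substance.
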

\begin{proof}[Proof of Proposition \ref{prop:interior-fat-existence}]
Suppose that $X$ has no fat point and $\{B_n: n<\omega \}$ is a base of $X$.
Since $X$ is locally compact,
 all $B_n$ can be chosen to be relatively compact.
For all $x\in X$, there exists a compact neighbourhood $\cl B_n$ of $x$
 such that $f(\cl B_n)$ is codense.
Note that the image of a compact set by a continuous map is always compact
 and every compact codense subset is nowhere dense.
Therefore, we have 
 $X=\bigcup\{ \cl B_n : f(\cl B_n) \text{ is nowhere dense}\}.$
We can see that
 $f(X)=\bigcup \{ f(\cl B_n): f(\cl B_n) \text{ is nowhere dense}\}$
 is meagre,
and hence $f(X)$ is codense by Lemma \ref{BCT}.
\end{proof}

%Similarly to the proof of Proposition \ref{prop:existence-proper},
% we will take real numbers that determine
% a strongly proper dyadic subbase inductively.
Let $f,g:X\rightarrow\mathbb{R}$ be continuous functions,
 $c$ a real number.
We consider whether
 there exists $b\in \mathbb{R}$ such that
 $b$ is a local extremum of neither $g$ nor $g|_{f^{-1}(c)}$
 and $c$ is not a local extremum of $f|_{g^{-1}(b)}$.
As the next proposition shows,
 if $c$ is not a local extremum of $f|_{\fat_g X}$,
 then the set
 $\{p\in \mathbb{R} : c\text{ is a local extremum of }f|_{g^{-1}(p)}\}$
 is meagre, and such a $b$ exists.
Since we will deal with countably many functions,
 we prove this in an extended form.
\begin{prop}\label{prop:high-dim}
Let $f:X\rightarrow\mathbb{R}$ be a continuous function,
$r$ a non-negative integer,
$g:X\rightarrow \mathbb{R}^r$ a continuous map,
$c$ a real number which is not a local extremum of $f|_{\fat_g X}$.
The set $\{p\in \mathbb{R}^r : c\text{ is a local extremum of }f|_{g^{-1}(p)}\}$
 is meagre. 
\end{prop}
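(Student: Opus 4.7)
The plan is to write $M=M^{\max}\cup M^{\min}$, where $M^{\max}$ (resp.\ $M^{\min}$) is the set of $p$ for which $c$ is a local maximum (resp.\ minimum) of $f|_{g^{-1}(p)}$, and to handle $M^{\max}$; the case $M^{\min}$ is symmetric. Let $\mathcal{B}$ be a countable base of $X$ consisting of relatively compact open sets, available because $X$ is locally compact and separable. For each pair $(B,B')\in\mathcal{B}^2$ with $\cl B\subseteq B'$, set
\begin{equation*}
T_{B,B'}:=\{p\in\mathbb{R}^r: \exists\, x\in B\text{ with }g(x)=p,\,f(x)=c,\text{ and }\cl B'\cap g^{-1}(p)\subseteq f^{-1}((-\infty,c])\}.
\end{equation*}
Whenever $p\in M^{\max}$ is witnessed by an open set $V$ and a point $x_0\in V$ with $g(x_0)=p$ and $f(x_0)=c$, local compactness and regularity yield $B,B'\in\mathcal{B}$ with $x_0\in B$, $\cl B\subseteq B'$ and $\cl B'\subseteq V$, so that $p\in T_{B,B'}$. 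Thus $M^{\max}$ is the countable union of the sets $T_{B,B'}$, and it suffices to show that each one is nowhere dense in $\mathbb{R}^r$.

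Fix $(B,B')$ as above and suppose for contradiction that some nonempty open $U\subseteq\mathbb{R}^r$ is contained in $\cl T_{B,B'}$. Let $A:=\cl B\cap f^{-1}(c)$, a closed and hence locally compact separable subspace of $X$. Since $T_{B,B'}\subseteq g(A)$ and $g(A)$ is compact (thus closed), $U\subseteq g(A)$. Repeating the proof of Proposition~\ref{prop:interior-fat-existence} inside $A$ with the map $g|_A$ shows that $g(A\setminus\fat_{g|_A}A)$ is meagre in $\mathbb{R}^r$: every non-fat point of $A$ has a relatively compact neighbourhood in $A$ whose closure maps to a compact codense, hence nowhere dense, subset of $\mathbb{R}^r$, and second-countability lets us cover $A\setminus\fat_{g|_A}A$ by countably many such neighbourhoods. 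By the Baire category theorem this meagre set cannot contain $U$, so there exists $p_0\in U$ with $p_0\in g(\fat_{g|_A}A)$; write $p_0=g(x)$ with $x\in \fat_{g|_A}A$. Lemma~\ref{lem:fat-inclusion} then gives $x\in\fat_g X$, while by construction $f(x)=c$ and $x\in \cl B\subseteq B'$.

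The hypothesis now enters. Since $c$ is not a local maximum of $f|_{\fat_g X}$, every open neighbourhood of $x$ contains some $y\in\fat_g X$ with $f(y)>c$; applying this to the neighbourhood $V_0:=B'\cap g^{-1}(U)$ of $x$ produces such a $y\in V_0\cap\fat_g X$. Choosing an open neighbourhood $V_1$ of $y$ with $V_1\subseteq V_0\cap f^{-1}((c,\infty))$ and using that $y$ is fat, $g(V_1)$ contains a nonempty open ball $W$; by construction $W\subseteq U\cap g(B'\cap f^{-1}((c,\infty)))$. Every $p\in T_{B,B'}$ satisfies $B'\cap g^{-1}(p)\cap f^{-1}((c,\infty))=\emptyset$ and therefore avoids $g(B'\cap f^{-1}((c,\infty)))$, so $W$ is disjoint from $T_{B,B'}$; being open, $W$ is disjoint from $\cl T_{B,B'}$ as well, contradicting $W\subseteq U\subseteq\cl T_{B,B'}$.

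The main obstacle is ensuring that the fat point $x$ produced by the adapted version of Proposition~\ref{prop:interior-fat-existence} lies in an open set where the hypothesis on $f|_{\fat_g X}$ can be applied to find the ``escape point'' $y$. Indexing the decomposition of $M^{\max}$ by nested pairs $(B,B')$ rather than by a single open set is the device that resolves this: the fat witness $x$ is forced into $\cl B\subseteq B'$, so $B'$ is automatically an open neighbourhood of $x$. A secondary technical issue is transferring Proposition~\ref{prop:interior-fat-existence} from $X$ to the closed subspace $A$, which is immediate because its proof uses only local compactness and second-countability of the domain, both inherited from $X$.
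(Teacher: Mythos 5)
Your proof is correct and follows essentially the same strategy as the paper: cover the bad set by countably many sets of the form ``$g(\text{compact piece of }U^{\bd}(f,c))$ minus $g(\text{open piece of }U^a(f,c))$'' indexed by a countable base, and show each is nowhere dense by producing a fat point over any putative open subset of its closure (via the Baire/fat-point argument of Proposition~\ref{prop:interior-fat-existence} and Lemma~\ref{lem:fat-inclusion}) and then using the hypothesis on $f|_{\fat_g X}$ together with fatness to find an interior point of the subtracted image. The only differences are organizational: you use nested pairs $(B,B')$ with $\cl B\subseteq B'$ in place of the paper's compact exhaustion $B=\bigcup_n K_n$ in Lemmas~\ref{lem:compact-cut-nowhere-dense} and~\ref{lem:on-each-open}, and you treat maxima and minima separately rather than via $a\in\{0,1\}$.
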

Before proving Proposition \ref{prop:high-dim}, we provide two lemmas.
\begin{lem}\label{lem:compact-cut-nowhere-dense}
Suppose that $f,g$ and $c$ are as above.
For any open subset $B$ of $X$ and any compact subset $K$ of $B$,
 $g(K\cap U^{\bd}(f,c)) \setminus g(B\cap U^a(f,c))$
 is nowhere dense for $a\in \{0,1\}$.
\end{lem}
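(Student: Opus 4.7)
The plan is to argue by contradiction, using Proposition \ref{prop:interior-fat-existence} to manufacture a fat preimage over any hypothetical open subset of $\cl E$, and then to invoke the non-extremum hypothesis to push that point into $U^a(f,c)$ while preserving fatness. Suppose $E := g(K\cap U^{\bd}(f,c))\setminus g(B\cap U^a(f,c))$ is not nowhere dense. Since $g(K\cap U^{\bd}(f,c))$ is a continuous image of a compact set it is closed, so $\cl E\subseteq g(K\cap U^{\bd}(f,c))$, and by assumption there exists a nonempty open set $W\subseteq\cl E$ in $\mathbb{R}^r$. In particular, every point of $W$ has a preimage in $K\cap U^{\bd}(f,c)$.

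Next I would produce a fat point in the fibre over $W$. Define $T := K\cap U^{\bd}(f,c)\cap g^{-1}(W)$: this is an open subset of the compact set $K\cap U^{\bd}(f,c)$, hence locally compact separable metric. Because $W\subseteq g(K\cap U^{\bd}(f,c))$, we have $g(T)=W$, which has nonempty interior. Applying the contrapositive of Proposition \ref{prop:interior-fat-existence} to $g|_T$ gives $\fat_g T\neq\emptyset$, and Lemma \ref{lem:fat-inclusion} then supplies a point $t\in\fat_g X$ with $f(t)=c$ and $g(t)\in W$.

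Now the non-extremum hypothesis enters. Take the open neighbourhood $V := B\cap g^{-1}(W)$ of $t$; then $V\cap\fat_g X$ is open in $\fat_g X$ and contains $t$, so $c\in f(V\cap\fat_g X)$. Since $c$ is not a local extremum of $f|_{\fat_g X}$ (using ``not a local maximum'' for $a=1$ and ``not a local minimum'' for $a=0$), I can choose $t'\in V\cap\fat_g X\cap U^a(f,c)$. Then $V\cap U^a(f,c)$ is open in $X$ and contains the fat point $t'$, so $g(V\cap U^a(f,c))$ has nonempty interior in $\mathbb{R}^r$; let $N$ be a nonempty open subset of this interior.

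To close the argument, observe that $N\subseteq g(V\cap U^a(f,c))\subseteq g(V)\subseteq W$ and $N\subseteq g(V\cap U^a(f,c))\subseteq g(B\cap U^a(f,c))$, so $N$ is a nonempty open subset of $W$ disjoint from $E$. But $N$ being open and disjoint from $E$ forces $N\cap\cl E=\emptyset$, contradicting $N\subseteq W\subseteq\cl E$. The delicate step is guaranteeing that the perturbed point $t'$ is itself fat so that $g(V\cap U^a(f,c))$ inherits nonempty interior; this is precisely why the hypothesis requires $c$ not to be a local extremum of the restriction $f|_{\fat_g X}$ rather than of $f$, and why Proposition \ref{prop:interior-fat-existence} must be applied first to obtain $t$ inside $\fat_g X$.
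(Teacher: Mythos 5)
Your proof is correct and follows essentially the same route as the paper: the paper also notes that $g(K\cap U^{\bd}(f,c))$ is compact, applies the contrapositive of Proposition \ref{prop:interior-fat-existence} to the fibre $K\cap U^{\bd}(f,c)\cap g^{-1}(V)$ over an open set, passes to $\fat_g X$ via Lemma \ref{lem:fat-inclusion}, and uses the non-extremum hypothesis together with fatness of the perturbed point to produce interior points of $g(B\cap U^a(f,c))$ inside that open set. The only cosmetic difference is that the paper argues directly that the closed superset $g(K\cap U^{\bd}(f,c))\setminus \inte g(B\cap U^a(f,c))$ is codense, whereas you run the same argument as a contradiction against $\inte\cl E\neq\emptyset$.
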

\begin{proof}
For each $a\in\{0,1\}$, we have 
\begin{equation*}
g(K\cap U^{\bd}(f,c)) \setminus g(B\cap U^a(f,c))
\subseteq g(K\cap U^{\bd}(f,c)) \setminus \inte g(B\cap U^a(f,c)).
\end{equation*}
The right hand side is closed, and we will show that it is codense.
%Therefore, we have only to show that 
% $g(K\cap U^{\bd}(f,c))\setminus \inte g(B\cap U^a(f,c))$
% is codense for $a\in \{0,1\}$.
Let $V\subseteq g(K\cap U^{\bd}(f,c))$ be a non-empty open subset.
We can see that $K\cap U^{\bd}(f,c) \cap g^{-1}(V)$
 is a locally compact separable metric space.
Therefore, by Proposition \ref{prop:interior-fat-existence},
 the set $\fat_g (K\cap U^{\bd}(f,c) \cap g^{-1}(V))$ is not empty.
By Lemma \ref{lem:fat-inclusion}, we obtain 
\begin{align*}
 \emptyset \neq \fat_g (K\cap U^{\bd}(f,c) \cap g^{-1}(V))
 &\subseteq \fat_g X \cap K \cap U^{\bd}(f,c) \cap g^{-1}(V)\\
 &\subseteq \fat_g X\cap B\cap U^{\bd}(f,c)\cap g^{-1}(V).
\end{align*}
Since $c$ is not a local extremum of $f|_{\fat_g X}$, 
 we obtain $\fat_g X\cap B\cap U^a(f,c)\cap g^{-1}(V)\neq \emptyset $
 for $a\in \{0,1\}$,
 and thus $g(B\cap U^a(f,c))$ has an interior point in $V$.
Thus, every non-empty open subset $V$ of $g(K\cap U^{\bd}(f,c))$
 intersects with $\inte g(B\cap U^a(f,c))$ non-trivially and,
 % for $a\in \{0,1\}$,
 consequently, $g(K\cap U^{\bd}(f,c))\setminus \inte g(B\cap U^a(f,c))$
 is codense for $a\in \{0,1\}$.
\end{proof}
\begin{lem}\label{lem:on-each-open}
Suppose that $f,g$ and $c$ are as above.
For any open subset $B$ of $X$,
 $g(B\cap U^{\bd}(f,c)) \setminus g(B\cap U^a(f,c))$
 is meagre for $a\in \{0,1\}$.
\end{lem}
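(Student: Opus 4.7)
The plan is to reduce this lemma to Lemma \ref{lem:compact-cut-nowhere-dense} by exhausting the open set $B$ with countably many compact subsets. Since $X$ is locally compact and separable metric, it has a countable base $\{B_n : n<\omega\}$ consisting of relatively compact open sets (as already used in the proof of Proposition \ref{prop:interior-fat-existence}). For the given open subset $B$, the subfamily $\{B_n : \cl B_n \subseteq B\}$ covers $B$, and therefore $B = \bigcup \{\cl B_n : \cl B_n \subseteq B\}$. Thus $B$ is a countable union of compact subsets $K_0, K_1, K_2, \dots$ with $K_m \subseteq B$ for each $m<\omega$.

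Next, I would distribute the union across the image and the set difference. Since $g(\bigcup_m A_m) = \bigcup_m g(A_m)$ for any family of sets, and $(\bigcup_m Y_m)\setminus Z = \bigcup_m (Y_m \setminus Z)$ for any $Z$, we obtain
\begin{equation*}
g(B\cap U^{\bd}(f,c)) \setminus g(B\cap U^a(f,c))
= \bigcup_{m<\omega} \bigl[g(K_m\cap U^{\bd}(f,c))\setminus g(B\cap U^a(f,c))\bigr].
\end{equation*}
By Lemma \ref{lem:compact-cut-nowhere-dense}, each term on the right is nowhere dense (applied with the open set $B$ and compact set $K_m\subseteq B$). Hence the countable union is meagre by definition, which gives the conclusion for each $a\in\{0,1\}$.

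There is essentially no obstacle beyond verifying the $\sigma$-compactness of $B$. The only subtlety worth noting is that Lemma \ref{lem:compact-cut-nowhere-dense} requires $K$ to be contained in $B$, not merely to intersect it; this is why I insist on choosing the basic sets with $\cl B_n \subseteq B$ rather than an arbitrary exhaustion. Once this containment is in place, the argument is purely a bookkeeping of countable unions, and no new geometric input about $f$, $g$, or $c$ is needed beyond what Lemma \ref{lem:compact-cut-nowhere-dense} has already extracted.
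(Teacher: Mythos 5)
Your proof is correct and follows essentially the same route as the paper: write $B$ as a countable union of compact subsets, apply Lemma \ref{lem:compact-cut-nowhere-dense} to each, and take the countable union. The paper simply asserts the $\sigma$-compactness of $B$ where you spell it out via the base of relatively compact open sets, but the argument is the same.
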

\begin{proof}
Since $B$ is a locally compact separable metric space,
 there are compact sets $K_n$, $n<\omega$, such that
 $B=\bigcup_{n<\omega} K_n$.
By Lemma \ref{lem:compact-cut-nowhere-dense},
 $g(K_n\cap U^{\bd}(f,c))\setminus g(B\cap U^a(f,c))$
 is nowhere dense for all $n<\omega$ and $a\in \{0,1\}$.
Therefore,  for each $a\in \{0,1\}$, their union
\begin{equation*}
\bigcup_{n<\omega} g(K_n\cap U^{\bd}(f,c))\setminus g(B\cap U^a(f,c))
=g(B\cap U^{\bd}(f,c))\setminus g(B\cap U^a(f,c))
\end{equation*}
 is meagre.
\end{proof}
\begin{proof}[Proof of Proposition \ref{prop:high-dim}]
Let $\{B_n: n<\omega\}$ be a countable base of $X$.
By Lemma \ref{lem:on-each-open},
 $g(B_n\cap U^{\bd}(f,c))\setminus g(B_n\cap U^a(f,c))$
is meagre for all $n<\omega$ and $a\in \{0,1\}$.
Therefore, their union
\begin{equation*}
M:=\widebigcup{n<\omega\\ a\in\{0,1\}} g(B_n\cap U^{\bd}(f,c))\setminus g(B_n\cap U^a(f,c))
\end{equation*}
is meagre.

Suppose that $c$ is a local extremum of $f|_{g^{-1}(p)}$ for a point $p\in\mathbb{R}^r$.
There exists a base element $B_n$
 such that $B_n\cap g^{-1}(p)\cap U^{\bd}(f,c)$ is non-empty,
 and either $B_n\cap g^{-1}(p) \cap U^0(f,c)$ or $B_n\cap g^{-1}(p) \cap U^1(f,c)$ is empty.
That is, $p$ belongs to $g(B_n\cap U^{\bd}(f,c))\setminus g(B_n\cap U^a(f,c))$ for an $a\in \{0,1\}$.
Therefore, we obtain
\begin{equation}
\{p\in \mathbb{R}^r : c\text{ is a local extremum of }f|_{g^{-1}(p)}\}\subseteq M.
\tag*{\qEd}
\end{equation}
%\qed
\def\popQED{}
\end{proof}

We show that if the set of $(r+1)$-tuples that we should avoid is meagre,
 then we have only to avoid a meagre subset at each step.
For a real number $c$, a hyperplane $H_c\subseteq \mathbb{R}^{r+1}$
 is defined as $\{(x_0,\dots,x_r): x_r=c\}$.
\begin{prop}\label{prop:meagre-fibre-meagre}
If $M\subseteq \mathbb{R}^{r+1}$ is meagre,
then the set $\{c\in \mathbb{R} : M\cap H_c\text{ is meagre in }H_c\}$ is comeagre,
 i.e., its complement is meagre.
\end{prop}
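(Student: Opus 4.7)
The plan is to reduce the problem to a single closed nowhere dense set and then apply a Fubini-style argument based on a countable base of $\mathbb{R}^r$. Since $M$ is meagre, write $M=\bigcup_{k<\omega} N_k$ with each $N_k$ nowhere dense; replacing $N_k$ by its closure, which remains nowhere dense, I may assume each $N_k$ is closed. Because a countable union of meagre sets is meagre, it suffices to prove that for a fixed closed nowhere dense $N\subseteq \mathbb{R}^{r+1}$ the set $E_N:=\{c\in\mathbb{R}: N\cap H_c \text{ is not nowhere dense in }H_c\}$ is meagre in $\mathbb{R}$; taking the union over $k$ of these exceptional sets then bounds the complement of the set in the statement.

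Since $N$ is closed, $N\cap H_c$ is closed in $H_c$, so ``not nowhere dense in $H_c$'' is equivalent to ``contains a nonempty open subset of $H_c$.'' Fix a countable base $\{U_n : n<\omega\}$ of $\mathbb{R}^r$ and identify $H_c$ with $\mathbb{R}^r\times\{c\}$. Then $c\in E_N$ if and only if there exists $n$ with $U_n\times\{c\}\subseteq N$, so $E_N=\bigcup_{n<\omega} E_{N,n}$, where $E_{N,n}:=\{c\in\mathbb{R}: U_n\times\{c\}\subseteq N\}$. It remains to show that each $E_{N,n}$ is nowhere dense.

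By construction $U_n\times E_{N,n}\subseteq N$, and since $N$ is closed this upgrades to $U_n\times \cl E_{N,n}\subseteq N$. If $\cl E_{N,n}$ contained an open interval $I$, then $U_n\times I$ would be a nonempty open subset of $N$, contradicting that $N$ is nowhere dense. Hence $\cl E_{N,n}$ has empty interior, so $E_{N,n}$ is nowhere dense and $E_N$ is meagre.

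The argument is clean and I do not foresee any serious obstacle; the only conceptual point requiring care is the reduction step, namely that if $N_k\cap H_c$ is nowhere dense in $H_c$ for every $k$, then $M\cap H_c\subseteq \bigcup_k (N_k\cap H_c)$ is meagre in $H_c$, which is immediate from the definition. The crucial use of $N$ being closed occurs exactly in the passage from $E_{N,n}$ to $\cl E_{N,n}$, which is what makes the slicing argument work.
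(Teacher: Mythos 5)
Your proof is correct and follows essentially the same route as the paper: decompose $M$ into closed nowhere dense sets, slice via a countable base $\{U_n\}$ of $\mathbb{R}^r$, and show each set $\{c: U_n\times\{c\}\subseteq N\}$ is nowhere dense. In fact you supply a detail the paper leaves implicit (passing from $E_{N,n}$ to $\cl E_{N,n}$ using closedness of $N$); the only cosmetic difference is that you phrase everything in terms of meagre exceptional sets and unions where the paper uses comeagre sets and intersections.
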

For the proof we need a lemma.
\begin{lem}\label{lem:fibre-codense-comeagre}
If $K\subseteq \mathbb{R}^{r+1}$ is closed nowhere dense,
then $\{ c\in \mathbb{R}: K\cap H_c$ is nowhere dense in $ H_c\}$ is comeagre.
\end{lem}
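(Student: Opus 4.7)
The plan is to identify $H_c$ with $\mathbb{R}^r$ in the obvious way and show that the complementary set
\[
E:=\{c\in\mathbb{R}: K\cap H_c\text{ is not nowhere dense in }H_c\}
\]
is meagre. Since $K$ is closed, each slice $K\cap H_c$ is already closed in $H_c$, so ``nowhere dense in $H_c$'' coincides with ``has empty interior in $H_c$''. Consequently, $c\in E$ if and only if $K\cap H_c$ contains a non-empty open subset of $H_c$, and by fixing a countable base $\{B_n:n<\omega\}$ of $\mathbb{R}^r$ (for instance rational open balls), this happens if and only if there exists $n<\omega$ with $B_n\times\{c\}\subseteq K$.

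Accordingly, I would define
\[
E_n:=\{c\in\mathbb{R}: B_n\times\{c\}\subseteq K\}
\]
for each $n<\omega$, so that $E=\bigcup_{n<\omega}E_n$. The key observation is then that each $E_n$ is closed and has empty interior in $\mathbb{R}$. Closedness follows from closedness of $K$: if $c_k\to c$ with $c_k\in E_n$, then for every $x\in B_n$ the sequence $(x,c_k)$ lies in $K$ and converges to $(x,c)$, forcing $(x,c)\in K$ and hence $c\in E_n$. Empty interior is where the nowhere-density of $K$ is used: if some open interval $I\subseteq\mathbb{R}$ were contained in $E_n$, then $B_n\times I$ would be a non-empty open subset of $\mathbb{R}^{r+1}$ contained in $K$, contradicting that $K$ is nowhere dense.

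Combining these two observations, each $E_n$ is closed with empty interior, hence nowhere dense, so $E=\bigcup_{n<\omega}E_n$ is meagre. Therefore its complement $\{c\in\mathbb{R}:K\cap H_c\text{ is nowhere dense in }H_c\}$ is comeagre, as required.

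There is no real obstacle beyond setting up the reduction cleanly; the main point is the product-structure argument that converts the hypothesis ``$K$ has empty interior in $\mathbb{R}^{r+1}$'' into ``each $E_n$ has empty interior in $\mathbb{R}$'' via the countable base of $H_c\cong\mathbb{R}^r$. This base argument is also what makes the proof go through for arbitrary $r$ without any separate case analysis.
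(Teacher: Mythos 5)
Your proof is correct and follows essentially the same route as the paper: both reduce to the sets $E_n=\{c\in\mathbb{R}:B_n\times\{c\}\subseteq K\}$ via a countable base of $\mathbb{R}^r$, show each is nowhere dense using the nowhere-density of $K$, and use closedness of $K$ to identify ``codense'' with ``nowhere dense'' on the slices. Your write-up merely makes explicit the closedness of each $E_n$, which the paper leaves implicit.
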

\begin{proof}
Let $\{B_n: n<\omega\}$ be a countable base of $\mathbb{R}^r$.
For a real number $c$, $B_n\times\{c\}$
 denotes the set $\{(x_0,\dots,x_{r-1},c): (x_0,\dots,x_{r-1})\in B_n\}$.
Note that if $K\cap H_c$ is not codense in $H_c$,
 then an open subset of $H_c$ is contained in $K$,
 and therefore, $B_n\times \{c\}\subseteq K$ for some $n<\omega$.
For each $n<\omega$,
 the set $\{ c\in \mathbb{R}: B_n\times \{c\} \subseteq K\}$ is nowhere dense
 because $K$ is nowhere dense.
We can see that their union 
\begin{equation*}
\bigcup_{n<\omega}\{ c\in \mathbb{R}: B_n\times \{c\} \subseteq K\}=
\{ c\in \mathbb{R}: K\cap H_c \text{ is not codense in } H_c\}
\end{equation*}
is meagre.
Therefore, its complement
 $\{ c\in \mathbb{R}: K\cap H_c \text{ is codense in } H_c\}$ is comeagre.
Since $K$ is closed, $K\cap H_c$ is closed in $H_c$.
Hence, 
 $\{ c\in \mathbb{R}: K\cap H_c \text{ is nowhere dense in } H_c\}$ is comeagre.
\end{proof}
%\begin{lem}\label{lem:fibre-nowhere-dense-comeagre}
%If $K\subseteq \mathbb{R}^{r+1}$ is nowhere dense,
%then $\{c\in \mathbb{R} : K\cap H_c \text{ is nowhere dense in }H_c\}$ is comeagre.
%\end{lem}
%\begin{proof}
%We have $\cl(K\cap H_c)\subseteq \cl K \cap H_c$.
%Since $H_c$ is closed, $\cl (K\cap H_c)$
% equals the closure of $K\cap H_c$ in the space $H_c$.
%Therefore, if $\cl K \cap H_c$ is codense in $H_c$, then $K\cap H_c$ is nowhere dense in $H_c$.
%That is, we have 
%\begin{equation*}
%\{ c\in \mathbb{R}: \cl K\cap H_c \text{ is codense in } H_c\}
%\subseteq \{ c\in \mathbb{R}: K\cap H_c \text{ is nowhere dense in } H_c\}.
%\end{equation*}
%By Lemma \ref{lem:fibre-codense-comeagre}, the left hand side is comeagre.
%\qed
\begin{proof}[Proof of Proposition \ref{prop:meagre-fibre-meagre}]
Suppose that $M\subseteq \mathbb{R}^{r+1}$ is meagre.
There exists a countable covering $M\subseteq \bigcup_{n<\omega} K_n$,
 where $K_n$ is closed nowhere dense for $n<\omega$.
By Lemma \ref{lem:fibre-codense-comeagre},
 $\{c\in \mathbb{R} : K_n\cap H_c \text{ is nowhere dense in }H_c\}$
 is comeagre for all $n<\omega$.
By De Morgan's law, the intersection of countably many comeagre subsets is comeagre.
Therefore, their intersection
\begin{equation*}
C:=\bigcap_{n<\omega}\{c\in \mathbb{R} : K_n\cap H_c \text{ is nowhere dense in }H_c\}\\
\end{equation*}
 is comeagre.
If $c\in C$, then $K_n\cap H_c$ is nowhere dense in $H_c$ for all $n<\omega$,
 and therefore,
 $\bigcup_{n<\omega}K_n\cap H_c\supseteq M\cap H_c$ is meagre in $H_c$.
 Hence, the set
 $\{c\in \mathbb{R} : M\cap H_c \text{ is meagre in }H_c\}\supseteq C$
 is comeagre.
\end{proof}

\subsection{Proof of Theorem \ref{thm:locally-compact-st-proper}}
\label{sec:proof-st-proper}

\subsubsection{Construction of a strongly proper dyadic subbase}
By Lemma \ref{lem:countable-fc},
 we can take a sequence $(f_n)_{n<\omega}$ of real-valued continuous functions
 and a sequence $(I_n)_{n<\omega}$ of open intervals in $\mathbb{R}$, such that
 the family $\{U^a(f_n,c_n) : n<\omega,\, a\in\{0,1\}\}$ is a subbase of $X$
 if $c_n\in I_n$ for $n<\omega$.

Every subset of $\omega$ is represented by
 the domain of a sequence of $\{\top,\bot\}$,
 where the \emph{top character} $\top$ means non-bottom.
For a sequence $\upsilon\in\{\top,\bot\}^*$,
$f_{\upsilon}$ denotes the map $f_{\upsilon}:X\rightarrow \mathbb{R}^{\dom(\upsilon)}$
given by $f_{\upsilon}(x)=(f_k(x))_{k\in \dom(\upsilon)}$.

Let $n>0$ be a finite ordinal.
If we have a sequence $(c_i)_{i<n}$
 and set $S_i^a=U^a(f_i,c_i)$ for all $i<n$ and $a\in \{0,1,\bd\}$,
 then we define $M(k,\tau,\upsilon)\subseteq \mathbb{R}^{\dom(\upsilon)}$ as
\begin{equation*}
M(k,\tau,\upsilon):=\{p\in \mathbb{R}^{\dom(\upsilon)} :
 c_k\text{ is a local extremum of }f_k|_{S(\tau)\cap f_{\upsilon}^{-1}(p)}\}
\end{equation*}
for all $\tau\in \{\bd,\bot\}^n$,
 $\upsilon =\bot^n\upsilon'\in\{\top,\bot\}^*$ and $k\in n\setminus \dom(\tau)$.
%$M(0,\bot,\upsilon)$ is meagre for all $\upsilon=\bot\upsilon'\in \{\top,\bot\}^*$.
%At each step, we want to define the sequence $(c_i)_{i<n}$ such that
For each $n$, we now define inductively a sequence $(c_i)_{i<n}$ such that
\begin{equation}
 \forall \tau\in\{\bd,\bot\}^n.\ 
 \forall \upsilon=\bot^n\upsilon' \in \{\top,\bot\}^*.\ 
 \forall k \in n\setminus \dom(\tau).\ 
M(k,\tau,\upsilon)
\text{ is meagre.}
\label{eq:st-proper-condition}
\end{equation}
%Setting $\tau=\upsilon=\bot$, we can see that $c_k$ is not a local extremum of $f_k$ for all $k<n$.
%Therefore, by Lemma \ref{lem:non-extrema}, $X\setminus S_k^{1-a}=\cl S_k^a$ for all $k<n$ and $a\in \{0,1\}$,
%i.e., $S_k^{\bd}$ is the common boundary of $S_k^0$ and $S_k^1$ for all $k<n$.
%Thus, we can justify the notation $S(\tau)$ for $\tau\in \{\bd,\bot\}^n$.
%

First we define $c_0\in \mathbb{R}$ as follows.
By Lemma \ref{lem:countable-extrema},
 the set of local extrema of $f_0|_{\fat_{f_\upsilon}X}$ is countable
 for all $\upsilon\in \{\top,\bot\}^*$ with $\upsilon(0)=\bot$.
Their union $E_0$ is countable.
Therefore, we can take $c_0\in I_0\setminus E_0$.
%,  which is not a local extremum of $f_0|_{\fat_{f_{\upsilon}}X}$
% for all $\upsilon\in \{\top,\bot\}^*$ with $\upsilon(0)=\bot$.
We set $S_0^a:=U^a(f_0,c_0)$ for $a\in \{0,1,\bd\}$.
Since $c_0\not\in E_0$,
 by Proposition \ref{prop:high-dim},
% we get
%\begin{equation*}
%\forall \upsilon=\bot\upsilon'\in \{\top,\bot\}^*.\ 
%M(0,\bot,\upsilon)%=\{p\in \mathbb{R}^{\dom(\upsilon)} : c_0\text{ is a local extremum of }f_0|_{f_{\upsilon}^{-1}(p)}\}
%\text{ is meagre.}
%\end{equation*}
the set $M(0,\bot,\upsilon)$
%=\{p\in \mathbb{R}^{\dom(\upsilon)} : c_0\text{ is a local extremum of }f_0|_{f_{\upsilon}^{-1}(p)}\}$
is meagre for all $\upsilon=\bot\upsilon'\in \{\top,\bot\}^*$.
Therefore, \eqref{eq:st-proper-condition} holds for $n=1$.

Let $n>0$ be a finite ordinal.
Suppose that 
we have obtained a sequence $(c_i)_{i<n}$ such that \eqref{eq:st-proper-condition} holds.
For $c\in \mathbb{R}$ and $\upsilon\in \{\top,\bot\}^*$ with $n\in \dom(\upsilon)$,
we define a hyperplane
$H_c(n,\upsilon):=\{(x_k)_{k\in \dom(\upsilon)}\in \mathbb{R}^{\dom(\upsilon)}: x_n=c\}$.
By Proposition \ref{prop:meagre-fibre-meagre},
 if $n\in \dom(\upsilon)$, then the set
\begin{equation*}
\{c \in\mathbb{R} : 
M(k,\tau,\upsilon)\cap H_c(n,\upsilon)
\text{ is meagre in } H_c(n,\upsilon)\}
\end{equation*}
 is comeagre for all $\tau\in\{\bd,\bot\}^n$, $\upsilon=\bot^n\upsilon' \in \{\top,\bot\}^*$ and
 $k \in n\setminus \dom(\tau)$.
Therefore, their intersection $C_n$ is comeagre. 
By Lemma \ref{lem:countable-extrema},
 the set of local extrema of $f_n|_{\fat_{f_\upsilon}S(\tau)}$ is countable
 for all $\tau\in \{\bd,\bot\}^n$ and $\upsilon=\bot^{n+1}\upsilon'\in \{\top,\bot\}^*$.
Their union $E_n$ is countable.
We can take $c_n$ from $(I_n\cap C_n) \setminus E_n$,
 and we set $S_n^a:=U^a(f_n,c_n)$ for $a\in\{0,1,\bd\}$.
Since $c_n\in C_n$, we obtain 
\begin{multline*}
\forall \tau\in \{\bd,\bot\}^n.\ \forall \upsilon=\bot^n\top \upsilon'\in \{\top,\bot\}^*.\ 
\forall k\in n\setminus\dom(\tau). \\
M(k,\tau[n\mapsto\bd],\upsilon[n\mapsto \bot])\text{ is meagre.}
\end{multline*} 
Note that $S(\tau)$ is a locally compact separable metric space for all $\tau\in\{\bd,\bot\}^n$. 
Since $c_n\not\in E_n$, by Proposition \ref{prop:high-dim},
$M(n,\tau,\upsilon)$ is meagre for all $\tau\in \{\bd,\bot\}^n$ and $\upsilon=\bot^{n+1}\upsilon'\in \{\top,\bot\}^*$.
Therefore, we can obtain a sequence $(c_n)_{n<\omega}$
which satisfies \eqref{eq:st-proper-condition} for all $n<\omega$ inductively.

\subsubsection{Proof of strong properness}
Suppose that $(f_n)_{n<\omega}$ is a sequence of functions,
 $(c_n)_{n<\omega}$ is a sequence of real numbers,
 \eqref{eq:st-proper-condition} holds for all $n<\omega$ and 
 $\{S_n^a : n<\omega,\, a\in \{0,1\}\}$ forms a subbase of $X$,
 where $S_n^a=U^a(f_n,c_n)$ for $n<\omega$ and $a\in \{0,1\}$.
We can easily see that $S$ is a dyadic subbase of $X$.
\begin{lem}\label{lem:extrema-on-bd}
For all $n<\omega$, $c_n$ is not a local extremum of $f_n|_{\bar{S}(\sigma)}$ for all $\sigma\in \{0,1,\bd,\bot\}^*$
 with $\sigma(n)=\bot$.
\end{lem}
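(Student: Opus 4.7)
My approach is proof by contradiction: I would turn a putative local extremum of $f_n$ on the half-space intersection $\bar S(\sigma)$ into one on a pure $\bd$-slice $S(\tau)$ with $\tau\in\{\bd,\bot\}^*$ and $\tau(n)=\bot$, and then rule the latter out by invoking \eqref{eq:st-proper-condition} in the degenerate case $\upsilon=\bot^N$. The degenerate case is the crucial one, because $\mathbb{R}^{\dom(\upsilon)}=\mathbb{R}^{0}$ is a one-point space in which, as the remark after Proposition \ref{prop:interior-fat-existence} records, any meagre set is empty.

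Suppose, without loss of generality, that $c_n$ is a local maximum of $f_n|_{\bar S(\sigma)}$, witnessed by some $x\in\bar S(\sigma)$ with $f_n(x)=c_n$ and an open $V\ni x$ on which $f_n\le c_n$ throughout $V\cap\bar S(\sigma)$. I would then classify the digit positions of $\sigma$ according to where $x$ sits:
\begin{equation*}
J := \{m\in\dom(\sigma_{0,1}): x\in S_m^{\bd}\},
\qquad
K := \dom(\sigma_{0,1})\setminus J,
\end{equation*}
and define $\sigma'$ by $\sigma'(m):=\bd$ for $m\in J$ and $\sigma'(m):=\sigma(m)$ otherwise. The new bottom-skeleton satisfies $\dom(\sigma'_{\bd})=J\cup\dom(\sigma_{\bd})$, and because $\sigma(n)=\bot$ we still have $\sigma'_{\bd}(n)=\bot$.

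Next I would shrink $V$ to $V':=V\cap\bigcap_{m\in K}S_m^{\sigma(m)}$, still an open neighborhood of $x$ by the very definition of $K$. An index-by-index check delivers the key inclusion $V'\cap S(\sigma'_{\bd})\subseteq V\cap\bar S(\sigma)$: on $J\cup\dom(\sigma_{\bd})$ the $\bd$-constraint is enforced by $S(\sigma'_{\bd})$, while on $K$ the half-space constraint is enforced by the shrinking. Consequently $f_n\le c_n$ on $V'\cap S(\sigma'_{\bd})$ with equality at $x$, so $c_n$ is a local maximum of $f_n|_{S(\sigma'_{\bd})}$.

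To close, I would pick any $N>\max(\{n\}\cup\dom(\sigma))$ and view $\sigma'_{\bd}$ as an element $\tilde\tau\in\{\bd,\bot\}^N$; then $n\in N\setminus\dom(\tilde\tau)$. Applying \eqref{eq:st-proper-condition} with $k=n$, $\tau=\tilde\tau$ and $\upsilon=\bot^N$ forces $M(n,\tilde\tau,\bot^N)\subseteq\mathbb{R}^{0}$ to be empty, so $c_n$ is not a local extremum of $f_n|_{S(\tilde\tau)}=f_n|_{S(\sigma'_{\bd})}$, contradicting the previous paragraph. The only non-routine ingredient is the inclusion $V'\cap S(\sigma'_{\bd})\subseteq\bar S(\sigma)$, which is precisely what the $J/K$ partition and the choice of $\sigma'$ are designed to deliver.
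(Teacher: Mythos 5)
Your proof is correct and follows essentially the same route as the paper: both reduce a putative local extremum of $f_n$ on $\bar S(\sigma)$ to one on a pure $\bd$-slice $S(\tau)$ by passing to the neighbourhood $V\cap\bigcap_{m}S_m^{\sigma(m)}$ over the indices where $x$ avoids the boundary (your $K$ is exactly the paper's $\dom(\sigma)\cap\dom(\varphi_S(x))$, and your $\sigma'_{\bd}$ is the paper's $\tau$), and then invoke \eqref{eq:st-proper-condition} with the all-bottom $\upsilon$, where meagre in $\mathbb{R}^0$ means empty. Your write-up is if anything slightly more explicit about that last degenerate-case point than the paper's.
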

\begin{proof}
In \eqref{eq:st-proper-condition}, setting $\upsilon:=\bot$, 
 we can see that $c_k$ is not a local extremum of $f_k|_{S(\tau)}$ for all $\tau\in\{\bd,\bot\}^n$
 and $k\in n\setminus \dom(\tau)$.
Since \eqref{eq:st-proper-condition} holds for all $n<\omega$,
 $c_n$ is not a local extremum of $f_n|_{S(\tau)}$ for all $\tau\in\{\bd,\bot\}^*$ and $n\in\omega\setminus\dom(\tau)$.

Suppose that $c_n$ is a local extremum of $f_n|_{\bar{S}(\sigma)}$ for a sequence $\sigma\in \{0,1,\bd,\bot\}^*$
with $\sigma(n)=\bot$.
There exists an open subset $V\subseteq X$
 such that $c_n$ is an extremum of $f_n|_{V\cap \bar{S}(\sigma)}$.
Take a point $x\in V\cap \bar{S}(\sigma)\cap f_n^{-1}(c_n)$
 and set
\begin{equation*}
 W:=V\cap \widebigcap{k\,\in\,\dom(\sigma) \\ \cap\dom(\varphi_S(x))}\, S_k^{\varphi_S(x)(k)}.
\end{equation*}
Let $\tau\in \{\bd,\bot\}^*$ be a sequence
 whose domain is the set $\dom(\sigma)\setminus \dom(\varphi_S(x))$.
%By definition, we have $S(\varphi_S(x)|_{\dom(\sigma)})\cap S(\tau)=S(\varphi_S(x)^{\bd}|_{\dom(\sigma)})$.
%Since we have $S_k^{\varphi_S(x)^{\bd}(k)}\subseteq \cl S_k^{\sigma(k)}$
For all $k\in \dom(\sigma)$,
 either $S_k^{\varphi_S(x)(k)}$ or $S_k^{\tau(k)}$ is defined,
 and the defined one is contained in $S_k^{\sigma(k)}\cup S_k^{\bd}$.
Therefore,
 we obtain
% $S(\varphi_S(x)|_{\dom(\sigma)}) \cap S(\tau)\subseteq \bar{S}(\sigma)$. 
%we get $S(\varphi_S(x)^{\bd}|_{\dom(\sigma)})\subseteq \bar{S}(\sigma)$.
%Hence, we have
\begin{align*}
W\cap S(\tau)=V\cap \widebigcap{k\,\in\,\dom(\sigma) \\\cap \dom(\varphi_S(x))} S_k^{\varphi_S(x)(k)} \cap
\widebigcap{\scriptsize \begin{aligned} k\in&\dom(\sigma)\\[-1pt]&\mathrlap{\setminus\dom(\varphi_S(x))}\end{aligned}} S_k^{\bd} %S(\tau)
 \quad\subseteq\quad V\cap \widebigcap{k\,\in\,\dom(\sigma)} (S_k^{\sigma(k)}\cup S_k^{\bd})
 = V\cap \bar{S}(\sigma).
\end{align*}
We can see that $c_n$ is an extremum value of $f|_{W\cap S(\tau)}$
 because $W\cap S(\tau)$ is a subset of $V\cap \bar{S}(\sigma)$ and contains the point $x\in f_n^{-1}(c_n)$.
Since $W$ is open, $c_n$ is a local extremum of $f|_{S(\tau)}$, a contradiction.
\end{proof}
\begin{prop}
 $S$ is a strongly proper dyadic subbase.
\end{prop}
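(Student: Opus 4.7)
The plan is to prove $\bar{S}(\sigma) = \cl S(\sigma)$ for every $\sigma \in \{0,1,\bd,\bot\}^*$ by induction on the cardinality of $\dom(\sigma_{0,1})$, that is, the number of positions at which $\sigma$ takes a value in $\{0,1\}$. The inclusion $\bar{S}(\sigma) \supseteq \cl S(\sigma)$ is free since $\bar{S}(\sigma)$ is closed and contains $S(\sigma)$, so only the reverse inclusion requires work.

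For the base case $\dom(\sigma_{0,1}) = \emptyset$, every defined entry of $\sigma$ is $\bd$, so $S(\sigma) = \bar{S}(\sigma) = \bigcap_{k \in \dom(\sigma)} S_k^{\bd}$, which is already closed. For the inductive step, pick any $n \in \dom(\sigma_{0,1})$, let $a := \sigma(n) \in \{0,1\}$, and set $\sigma' := \sigma[n \mapsto \bot]$. The inductive hypothesis yields $\bar{S}(\sigma') = \cl S(\sigma')$. Using $S_n^a \cup S_n^{\bd} = X \setminus S_n^{1-a}$ we get
\begin{equation*}
\bar{S}(\sigma) \;=\; \bar{S}(\sigma') \setminus S_n^{1-a} \;=\; \cl S(\sigma') \setminus S_n^{1-a}.
\end{equation*}
Since $\sigma'(n) = \bot$, Lemma \ref{lem:extrema-on-bd} says that $c_n$ is not a local extremum of $f_n|_{\bar{S}(\sigma')} = f_n|_{\cl S(\sigma')}$, and applying Lemma \ref{lem:non-extrema} with $A := S(\sigma')$ gives $\cl S(\sigma') \setminus S_n^{1-a} = \cl(S(\sigma') \cap S_n^a) = \cl S(\sigma)$, closing the induction.

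The main delicate point is the choice of induction parameter: the $0/1$-entries and the $\bd$-entries of $\sigma$ behave asymmetrically under closure, so inducting on the full sequence length—as in the proof of Proposition \ref{prop:existence-proper}—would force a case split at each step. Inducting instead on $|\dom(\sigma_{0,1})|$ sidesteps this, because the $\bd$-positions are absorbed into the ambient closed set and their effect on extrema is already accounted for by Lemma \ref{lem:extrema-on-bd}, which is stated for general $\sigma \in \{0,1,\bd,\bot\}^*$ rather than just $\sigma \in \mathbb{T}^*$. Beyond lining up these two lemmas correctly, I do not anticipate any further obstacle.
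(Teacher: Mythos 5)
Your proof is correct and is essentially the paper's own argument: the paper also reduces to $\sigma_{\bd}$ (whose $\bar{S}$ and $S$ coincide and are closed) and then restores the $0/1$-entries one at a time, applying Lemma \ref{lem:extrema-on-bd} and Lemma \ref{lem:non-extrema} at each step exactly as you do. Your induction on $|\dom(\sigma_{0,1})|$ is just the same chain $\tau_0=\sigma_{\bd},\dots,\tau_{m-1}=\sigma$ read in the opposite direction.
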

\begin{proof}
Let $\sigma\in \{0,1,\bd,\bot\}^*$ be a sequence,
\begin{equation*}
 \sigma_{\bd}(n):=\left\{
 \begin{array}{ll}
  \bd &(\text{if }\sigma(n)=\bd)\\
  \bot & (\text{otherwise})
 \end{array}\right.
\end{equation*}
 its restriction.
Since the cardinality of $\sigma^{-1}(\{0,1\})$ is finite,
 we can take a finite sequence $(\tau_k)_{k<m}$
 such that $\tau_0=\sigma_{\bd}$,
 $\tau_{k+1}=\tau_k[n\mapsto \sigma(n)]$ for some $n\in \dom(\sigma)\setminus \dom(\tau_k)$ for all $k<m$,
 and $\tau_{m-1}=\sigma$.
We show that $\bar{S}(\tau_k)=\cl S(\tau_k)$ holds for all $k<m$ by induction.
By definition, we have $\bar{S}(\tau_0)=S(\tau_0)=\cl S(\tau_0)$.
Assume that we have $\bar{S}(\tau_k)=\cl S(\tau_k)$ and $\tau_{k+1}=\tau_k[n\mapsto a]$ for some $k<m$,
 $n\in \dom(\sigma)\setminus \dom(\tau_k)$ and $a=\sigma(n)\in \{0,1\}$.
By the assumption, we have $\bar{S}(\tau_k[n\mapsto a])=\bar{S}(\tau_k)\cap \cl S_n^a=\cl S (\tau_k)\cap \cl S_n^a$.
By Lemma \ref{lem:extrema-on-bd}, $c_n$ is not a local extremum of $f_n|_{\bar{S}(\tau_k)}$.
By Lemma \ref{lem:non-extrema},
we obtain $\cl S(\tau_k) \cap \cl S_n^a=\cl (S(\tau_k)\cap S_n^a)=\cl S(\tau_{k+1})$. 
\end{proof}

\section*{Acknowledgement}
I would like to thank Hideki Tsuiki for his expert guidance and valuable discussions.
I am also grateful to Haruto Ohta for technical advices.
Finally, I thank the referees for valuable comments.

%\bibliographystyle{abbrv}
%\bibliography{tsukamotoref}

\end{document}